\pgfplotsset{compat = 1.17}
\let\classAND\AND
\let\AND\relax
\let\AND\classAND
\newcommand\C{\ensuremath{\mathbb{C}}}
\newcommand\R{\ensuremath{\mathbb{R}}}
\newcommand\A{\ensuremath{\mathbb{A}}}
\newcommand\D{\ensuremath{\mathbb{D}}}
\newcommand\hardy{\ensuremath{\mathcal{H}}}
\newcommand\newhardy{\ensuremath{\mathcal{H}_2(\bar{\mathbb{A}}^{\mathsf{c}})}}
\newcommand\re{\ensuremath{\text{Re}}}
\newcommand\im{\ensuremath{\text{Im}}}
\newcommand\iunit{\ensuremath{\mathrm{i}}}
\newcommand\bfI{\ensuremath{\mathbf{I}}}
\newcommand\bfC{\ensuremath{\mathbf{C}}}
\newcommand\bfB{\ensuremath{\mathbf{B}}}
\newcommand\bfA{\ensuremath{\mathbf{A}}}
\newcommand\bfK{\ensuremath{\mathbf{K}}}
\newcommand\bfN{\ensuremath{\mathbf{N}}}
\newcommand\bfV{\ensuremath{\mathbf{V}}}
\newcommand\bfSigma{\ensuremath{\mathbf{\Sigma}}}
\newcommand\ctrlg{\ensuremath{\mathbf{X}_c}}
\newcommand\obsg{\ensuremath{\mathbf{X}_o}}
\newcommand\bfG{\ensuremath{\mathbf{G}}}
\newcommand\mathfrakH{\ensuremath{\mathfrak{H}}}
\newcommand\fro{\ensuremath{\mathsf{F}}}
\newtheorem{theorem}{Theorem} 
\newtheorem{lemma}{Lemma}
\newtheorem{assumption}{Assumption} 
\newtheorem{definition}{Definition}
\newtheorem{remark}{Remark}
\begin{document}

\title{Balanced truncation with conformal maps}

\author[$\dagger$]{Alessandro Borghi}
\affil[$\dagger$]{%
  Technical University of Berlin, Mathematics Department, Stra\ss e des 17. Juni 136, 10623 Berlin, Germany.\authorcr
  \email{borghi@tu-berlin.de}, \orcid{0000-0002-5333-3074} \authorcr \email{tobias.breiten@tu-berlin.de}, \orcid{0000-0002-9815-4897}
}

\author[$\dagger$]{Tobias Breiten}

\author[$\ddagger$]{Serkan Gugercin}
\affil[$\ddagger$]{%
  Department of Mathematics and Division of Computational Modeling and Data
  Analytics, Academy of Data Science, Virginia Tech,
  Blacksburg, VA 24061, USA.\authorcr
  \email{gugercin@vt.edu}, \orcid{0000-0003-4564-5999}
}

\shorttitle{Balanced truncation with conformal maps}
\shortauthor{A. Borghi, T. Breiten, S. Gugercin}
\shortdate{\today}
  
\keywords{%
model order reduction, balanced truncation, conformal mapping
}

\msc{
    34C20, 41A20, 93A15, 93C05
}

\abstract{
We consider the problem of constructing reduced models for large scale systems with poles in general domains in the complex plane (as opposed to, e.g., the open left-half plane or the open unit disk). 
Our goal is to design a model reduction scheme, building upon theoretically established methodologies, yet  encompassing this new class of models.
To this aim, we develop a balanced truncation framework through conformal maps to handle poles in general domains. The major difference from classical balanced truncation resides in the formulation of the Gramians. We show that these new Gramians can still be computed by solving modified Lyapunov equations for specific conformal maps. 
A numerical algorithm to perform balanced truncation with conformal maps is developed and is tested on three numerical examples, namely a heat model, the Schr\"odinger equation, and the undamped linear wave equation, the latter two having spectra on the imaginary axis. 
}

\novelty{%
}

\maketitle

\section{Introduction} \label{sec:1}
We consider large-scale linear time invariant (LTI) systems of the form 
\begin{equation}\label{eq:fom}
	\begin{cases}
		\dot{\mathbf{x}}(t)=\mathbf{A}\mathbf{x}(t)+\mathbf{B}\mathbf{u}(t),\\
		\mathbf{y}(t) = \mathbf{C} \mathbf{x}(t), \quad \mathbf{x}(0)=0,   
  \end{cases}
\end{equation}
with $\bfA\in\C^{n\times n}$, $\bfB\in\C^{n\times m}$, and $\bfC\in\C^{q\times n}$. In~\eqref{eq:fom},  $\mathbf{x}(t)\in\C^n$, $\mathbf{u}(t)\in\C^m$, and $\mathbf{y}(t)\in\C^q$ denote, respectively, the states, inputs, and outputs of the LTI system. 
Throughout the paper we mainly consider the frequency domain description of \eqref{eq:fom} given by the transfer function 
\begin{equation}\label{eq:tffom}
    \bfG(\cdot) = \bfC(\cdot\bfI-\bfA)^{-1}\bfB.
\end{equation}
The system described by \eqref{eq:fom} and \eqref{eq:tffom} is referred to as the full order model (FOM). 
In the case of a large scale system the computational effort to solve \eqref{eq:fom} for different input signals can often be prohibitive.  The aim of model order reduction is to compute a reduced order model (ROM) that resembles the input output behaviour of \eqref{eq:fom} while drastically lowering the state dimension. 
More specifically, the objective is to determine a surrogate model of \eqref{eq:fom} with the same structure, i.e.,
\begin{equation}\label{eq:rom}
	\begin{cases}
 \dot{\mathbf{x}}_r(t)=\mathbf{A}_r\mathbf{x}_r(t)+\mathbf{B}_r\mathbf{u}(t),\\
		\mathbf{y}_r(t) = \mathbf{C}_r \mathbf{x}_r(t), \quad \mathbf{x}_r(0)=0,   
  \end{cases}
\end{equation}
and transfer function  
\begin{equation}\label{eq:tfrom}
    \bfG_r(\cdot)=\bfC_r(\cdot\bfI-\bfA_r)^{-1}\bfB_r,
\end{equation}
where $\mathbf{A}_r\in\mathbb{C}^{r\times r}$, $\bfB_r\in\mathbb{C}^{r\times m}$, and  $\bfC_r\in\mathbb{C}^{q\times r}$, such that the output behaviour $\mathbf{y}_r$ well approximates $\mathbf{y}$ for a set of inputs $\mathbf{u}$. In particular, for the model to be computationally efficient, we impose $r\ll n$. As a metric of disparity between the two models $\bfG$ and $\bfG_r$, generally the $\hardy_\infty$ or the $\hardy_2$ norms are  used (see, e.g., \cite[Section 5.1.3]{Ant05}). 

Many model order reduction techniques have been developed to approximate the systems of the 
form~\eqref{eq:fom}. We refer the reader to \cite{Ant05,AnBeGu20,BeMeSo05,BenOCW17,DeG21,QuaMN16,Volk13} and the extensive references therein for a 
detailed overview of different techniques. The framework developed in this article is closely related to balanced truncation (BT)~\cite{Moo81,MuRo76}, one of the gold standards in system theoretic approaches to model reduction, and its extension to structured differential equations \cite{SorAnt05,Bre16}.
In this paper, we focus on the classical Lyapunov balancing; for details on the other variants of BT, we refer the reader to the  survey articles \cite{GuAn04,BreS21}. Furthermore, here we focus on the projection-based formulation of BT. For a data-driven formulation of BT
using only transfer function  
evaluations, see  \cite{GoGuBe22}.

In this paper, we assume that the poles of $\bfG$, i.e., the eigenvalues of $\bfA$, lie in $\A\subset\C$, a non-empty connected open set (which is not necessarily the open left-half plane as usually assumed). We then  adopt the conformal mapping framework introduced in \cite{BorBre23} to extend BT to LTI systems with poles in general domains $\A\subset\C$. More specifically, the main contributions are the following:
\begin{enumerate}
    \item Via conformal mappings, we develop the Gramians of an LTI system with poles in general domains and, consequently, we extend the balanced truncation algorithm to this class of systems.
    \item We prove that, for some choice of conformal mappings, the Gramians are the solutions of modified Lyapunov equations.
    \item We prove that the resulting reduced model 
    preserves stability when specific conformal maps are adopted. In addition, we provide an a-posteriori bound on an appropriately modified $\hardy_2$ like norm.
    \item We develop an algorithmic framework and show the effectiveness of the proposed method on a diverse set of examples with poles in different domains.
\end{enumerate}

The structure of the paper is as follows. In \cref{sec:preliminaries} we review some basic facts on  conformal maps and balanced truncation. 
\cref{sec:3} introduces our main result, a conformal mapping framework for BT, and the corresponding algorithm.
In \cref{sec:4} we discuss some theoretical results on BT with conformal maps. Specifically, we prove stability preservation of the reduced model when specific conformal maps are used, and develop a bound on the $\hardy_2$ error norm.  
Three numerical experiments with the proposed algorithm are provided in \cref{sec:5}. Here, we use partial differential equations with spectra on the left-half complex plane and on the imaginary axis.

\subsection{Notation} Throughout the paper we indicate with $\|\cdot\|_{\fro}$ the Frobenius norm and $\|\cdot\|_2$ the spectral norm. 
The absolute value of a complex number $z$ is denoted by $|z|=\sqrt{zz^*}$. 
The symbol $\iunit$ indicates the imaginary unit. The symbol $(\cdot)^*$ indicates the complex conjugation of a scalar or the conjugate transpose of a matrix. 
If $\A$ is an open subset of the complex plane, $\partial\A$ denotes its boundary, $\bar{\A}=\{\A\cup\partial\A\}$ its closure, $\A^{\mathsf{c}}$  its complement, and $\bar{\A}^{\mathsf{c}}=\{\C\backslash\bar{\A}\}$ its exterior. 
The symbols $\C$, $\C_-$, and $\C_+$ stand for the complex plane, the open left-half complex plane, and the open right-half complex plane, respectively. In addition, $\R$ and $\iunit\R$ indicate the real numbers and the imaginary numbers, respectively. 
For a single-variable complex-valued differentiable bijective function $f$ we indicate its complex derivative by $f'$ and its inverse by $f^{-1}$.
In addition, for a complex-valued function $g$, we indicate the composition of $f$ and $g$ as $f\circ g$ or $f(g(\cdot))$. 
For the numerical examples, spatially localized controls on intervals $[a,b]$ are addressed with the indicator function $\chi_{[a,b]}$. 

\section{Preliminaries} \label{sec:preliminaries}

\subsection{A conformal mapping framework} \label{sec:2}
As stated in \cref{sec:1}, this paper considers LTI systems with poles in general domains $\A$ that are not necessarily the unit disk nor the left-half plane. For the FOM with transfer function~\eqref{eq:tffom}, to simplify the presentation, we assume that $\bfA$ has the eigendecomposition $\bfA = \bfV\mathbf{\Lambda}\bfV^{-1}$, $\mathbf{\Lambda}=\textnormal{diag}(\lambda_1,\dots,\lambda_n)$, with simple eigenvalues.
As we discuss in \cref{remark:schur} the analysis can be extended to the general case via  Schur decomposition.
For $\A\subset\C$ being a non-empty connected open set, we then have that $\lambda_j\in\A$, for $j=1,\dots,n$. 
To adapt balanced truncation to this type of systems, we develop a new framework that relies on conformal maps. {For this reason, we recall the conformal mapping theorem below.} 
\begin{theorem}[\cite{Weg12}, Theorem 6.1.2]\label{th:com} 
	Suppose $\mathbb{X},\mathbb{Y}\subset\mathbb{C}$ are open sets and let $\psi\colon\mathbb{X}\rightarrow\mathbb{Y}$ be Fr\'echet differentiable as a function of two real variables. The mapping $\psi$ is conformal in $\mathbb{X}$ if and only if it is analytic in $\mathbb{X}$ and $\psi'(z_0)\neq 0$ for every $z_0\in\mathbb{X}$.
\end{theorem}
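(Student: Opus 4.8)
The plan is to localize the statement to the behaviour of the real differential $D\psi(z_0)$ at each point $z_0\in\mathbb{X}$ and to translate the geometric notion of conformality into the algebraic condition that this differential is complex-linear, i.e. that it acts as multiplication by a single complex number. Writing $\psi = u + \iunit v$ with $u,v$ real-valued functions of $(x,y)$, Fr\'echet differentiability means that near $z_0$ the map has a first-order expansion with Jacobian
\[
J(z_0) = \begin{pmatrix} u_x & u_y \\ v_x & v_y \end{pmatrix}.
\]
By the chain rule, if $\gamma$ is a smooth curve through $z_0$ with tangent vector $\tau$, then $\psi\circ\gamma$ has tangent vector $J(z_0)\,\tau$ at $\psi(z_0)$; hence the action of $\psi$ on angles between intersecting curves is entirely governed by the linear map $J(z_0)$.

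The crux is a linear-algebra characterization that I would establish first: a real $2\times 2$ matrix $J$ preserves the oriented angle between every pair of nonzero vectors if and only if $J = \begin{pmatrix} a & -b \\ b & a \end{pmatrix}$ with $(a,b)\neq(0,0)$, equivalently $J = r\,R(\theta)$ for some $r>0$ and rotation $R(\theta)$. To prove the nontrivial implication, I would apply angle preservation to the orthogonal pairs $(e_1,e_2)$ and $(e_1+e_2,\, e_1-e_2)$: orthogonality of $Je_1,Je_2$ together with orthogonality of $J(e_1+e_2),J(e_1-e_2)$ forces $Je_1\perp Je_2$ and $|Je_1|=|Je_2|$, so $J = rQ$ with $Q$ orthogonal and $r>0$, and orientation preservation then gives $\det Q = 1$, i.e. $Q$ is a rotation. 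Identifying $\R^2$ with $\C$, the matrix $\begin{pmatrix} a & -b \\ b & a \end{pmatrix}$ is exactly multiplication by the complex number $a+\iunit b$.

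With this lemma in hand both directions follow. For the forward direction, conformality of $\psi$ at $z_0$ means $J(z_0)$ preserves oriented angles, so the lemma yields $u_x = v_y$ and $u_y = -v_x$ at $z_0$; these are the Cauchy--Riemann equations, and since a Fr\'echet-differentiable map satisfying them is complex-differentiable at $z_0$, running this over all of $\mathbb{X}$ makes $\psi$ analytic with $\psi'(z_0) = u_x + \iunit v_x = a + \iunit b \neq 0$. For the converse, if $\psi$ is analytic then Cauchy--Riemann holds, so $J(z_0) = \begin{pmatrix} a & -b \\ b & a \end{pmatrix}$ with $a+\iunit b = \psi'(z_0)$; the hypothesis $\psi'(z_0)\neq 0$ gives $(a,b)\neq(0,0)$, so $J(z_0)$ is a nonzero scalar multiple of a rotation and therefore preserves oriented angles, i.e. $\psi$ is conformal at $z_0$.

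The main obstacle I anticipate is being precise about the adopted meaning of \emph{conformal}, since the proof hinges on it: one must fix the convention (preservation of both angle magnitude and orientation) so that the degenerate case $J(z_0)=0$ and the orientation-reversing case $\begin{pmatrix} a & b \\ b & -a \end{pmatrix}$ --- multiplication by $a+\iunit b$ composed with conjugation, which is anti-analytic --- are both correctly excluded. A secondary technical point is that angle preservation must be read off from the differential via the chain rule, which is exactly where genuine Fr\'echet differentiability (rather than mere existence of partial derivatives) is needed to guarantee that tangent vectors transform linearly under $J(z_0)$.
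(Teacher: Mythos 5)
This theorem is not proved in the paper at all: it is recalled verbatim from \cite{Weg12} (Theorem 6.1.2) as background material, so there is no in-paper argument to compare yours against. Judged on its own, your proof is the standard one and is correct. You reduce conformality at $z_0$ to the property that the real differential $J(z_0)$ preserves oriented angles; your linear-algebra lemma is sound (from $Je_1\perp Je_2$ and from applying orthogonality to the pair $(e_1+e_2,\,e_1-e_2)$ you get $|Je_1|=|Je_2|$, hence $J=rQ$ with $Q$ orthogonal, and $\det J>0$ from orientation preservation picks out rotations); identifying such matrices with multiplication by the nonzero complex number $a+\iunit b$ then yields the Cauchy--Riemann equations together with $\psi'(z_0)\neq 0$ in one direction, and conversely analyticity plus $\psi'(z_0)\neq 0$ gives a nonzero multiple of a rotation, hence conformality. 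Your two closing caveats are exactly the points on which the statement hinges: \emph{conformal} must be taken to mean preservation of both angle magnitude and orientation, so that the degenerate case $J(z_0)=0$ and the anti-analytic (orientation-reversing) case are excluded, and genuine Fr\'echet differentiability, rather than mere existence of partials, is what licenses the chain-rule identification of the induced map on tangent vectors with $J(z_0)$.
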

Throughout the paper, we make the following assumptions
\begin{assumption}\label{assumption:1}
    We assume that 
    \begin{itemize}
        \item [(2.1)] $\psi\colon\mathbb{X}\rightarrow\A$ is a bijective conformal map where $\mathbb{X}\subseteq\C_-$ such that its boundary $\partial\mathbb{X}$ includes the imaginary axis $\iunit\R$. 
        \item [(2.2)] $\psi\colon\tilde{\mathbb{X}}\rightarrow\bar{\mathbb{A}}^{\mathsf{c}}$ is also conformal with $\tilde{\mathbb{X}}\subseteq\C\backslash\{\mathbb{X}\cup\iunit\R\}$.
        \item [(2.3)] Let $\partial\A^+$ the boundary of $\A$ such that its interior includes the eigenvalues of $\bfA$, we then consider $\psi\circ\iunit\colon\R\rightarrow\partial\A^+$ to be continuously differentiable and $\psi'(z)\neq0$ for every $z\in\R$.
    \end{itemize}
\end{assumption}
A simplified graphical depiction of \cref{assumption:1} is given in \cref{fig:confmap} (a more involved example is given in \cref{fig:joukowskiwave}). Note that, by the inverse mapping theorem, \cref{assumption:1}.3 guarantees bijectivity of $\psi$ on the imaginary axis. Some of the main results in this paper, specifically \cref{th:mobiuslyap} and \cref{th:stabilitypreservation}, use a M\"obius transformation $m$ as conformal map satisfying \cref{assumption:1}. 
The M\"obius transformation $m$ and its inverse are given by 
\begin{equation}\label{eq:mobiustrans}
\begin{aligned}
m(\cdot) &= \frac{\alpha \cdot+\beta}{\gamma \cdot + \delta}, \quad m^{-1}(\cdot) = \frac{\beta-\delta\cdot}{\gamma\cdot - \alpha},\\ &\textnormal{with } \alpha,\beta,\gamma,\delta\in\mathbb{C}\; \textnormal{ and }\;\alpha\delta -\beta\gamma\neq0.
\end{aligned}
\end{equation}
We refer the reader to \cite[Section 6.3]{Weg12} for more details. 
In addition, we also apply the M\"obius transformation to matrices. Consider the matrix $\bfA$, we then have the following definitions: 
\begin{equation}\label{eq:mobiustransmatrix}
\begin{aligned}
m(\bfA) &= (\alpha \bfA+\beta\bfI)(\gamma \bfA+ \delta)^{-1},\\ 
m^{-1}(\bfA) &= (\beta\bfI-\delta\bfA)(\gamma\bfA - \alpha\bfI)^{-1}, 
\end{aligned}
\end{equation}
with the parameters $\alpha,\beta,\gamma,\delta$ being as in \eqref{eq:mobiustrans}. In this manuscript, the application of a scalar function $m$ to a matrix $\bfA$ follows the definition given in \cite{Hig08}. More in detail, for the eigendecomposition of $\bfA$, we define
\[
m(\bfA) := \bfV m(\mathbf{\Lambda}) \bfV^{-1} = \bfV \begin{bmatrix}
    m(\lambda_1) & & \\
    & \ddots & \\
    & &  m(\lambda_n)
\end{bmatrix}\bfV^{-1}.
\]
A similar definition applies for $m^{-1}$.

In the next section, we introduce a space of square integrable functions with poles in general domains. 

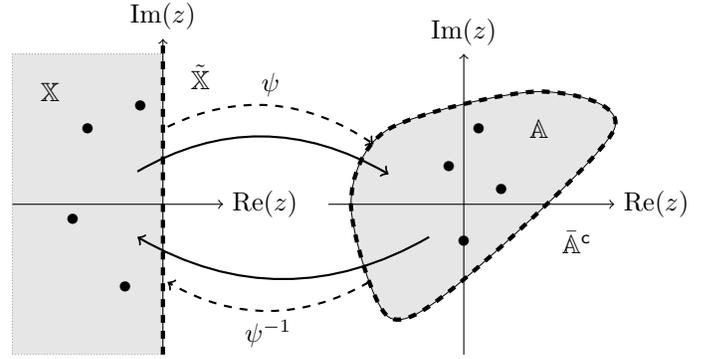
\begin{figure}
    \centering
    \begin{tikzpicture}[scale=1]
        \draw[ultra thin, densely dotted, fill=gray!20] (-3,-2) rectangle (-1,2);
        \draw[->] (-3,0) -- (-0.2,0) node[right] {$\text{Re}(z)$};
        \draw[->] (-1,-2) -- (-1,2.2) node[above] {$\text{Im}(z)$};
        \draw[ultra thick, dashed , black] (-1,-2) -- (-1,2.2);
        \node at (-2.5,1.5) {$\mathbb{X}$};
        \node at (-0.5,1.7) {$\tilde{\mathbb{X}}$};
        \draw (-2,1) node {$\bullet$};
        \draw (-2.2,-0.2) node {$\bullet$};
        \draw (-1.5,-1.1) node {$\bullet$};
        \draw (-1.3,1.3) node {$\bullet$};
        
        \draw[ultra thin, fill=gray!20] plot [smooth cycle] coordinates {(3.5-2,0) (4-2,1) (6-2,1.5) (7-2,1) (5-2, -1) (4-2,-1.5)};
        \draw [black, dashed, ultra thick] plot [smooth cycle] coordinates {(3.5-2,0) (4-2,1) (6-2,1.5) (7-2,1) (5-2, -1) (4-2,-1.5)};
        \draw[->] (3.2-2,0) -- (7-2,0) node[right] {$\text{Re}(z)$};
        \draw[->] (5-2,-2) -- (5-2,2) node[above] {$\text{Im}(z)$};
        \node at (6-2,1) {$\mathbb{A}$};
        \node at (6.5-2,-0.5) {$\bar{\mathbb{A}}^{\mathsf{c}}$};
        \draw (5.2-2,1) node {$\bullet$};
        \draw (5.5-2,0.2) node {$\bullet$};
        \draw (4.8-2,0.5) node {$\bullet$};
        \draw (5-2,-0.5) node {$\bullet$};
        
        \draw[->, thick, shorten >=0pt, shorten <=2pt, dashed] (-1,1) to[bend left] node[midway, above] {$\psi$} (3.8-2,0.8);
        \draw[->, thick, shorten >=0pt, shorten <=2pt] (-1.4,0.4) to[bend left] node[midway, above] {} (4-2,0.4);
        \draw[->, thick, shorten >=2pt, shorten <=2pt] (4.6-2,-0.4) to[bend left] node[midway, below] {} (-1.4,-0.4);
        \draw[->, thick, shorten >=2pt, shorten <=2pt, dashed] (3.8-2,-1) to[bend left] node[midway, below] {$\psi^{-1}$} (-1,-1);
        
    \end{tikzpicture}
    \caption{An illustration of a conformal map satisfying \cref{assumption:1}. The arrows between the grey sets $\mathbb{X}$ and $\mathbb{A}$ indicate the bijectivity of $\psi$. The same holds for the dashed arrow lines between the dashed boundaries $\iunit\R$ and $\partial\A^+$ (in this depiction $\partial\A^+$ coincides with $\partial\A$). The dots $\bullet$ indicate the poles of the transfer function $\bfG$. In addition, $\psi$ conformally maps the white sets $\tilde{\mathbb{X}}$ and $\bar{\mathbb{A}}^{\mathsf{c}}$.}
    \label{fig:confmap}
\end{figure}

\subsection{The \texorpdfstring{$\newhardy$}{H\_2(barA\^c)} space}
In model order reduction, $\hardy_2$ denotes a particular Hardy space. More precisely, it denotes the Hilbert space consisting of all functions $\mathbf{F}$ and $\mathbf{H}$, respectively, analytic in $\C_+$ satisfying
\[
\sup_{x>0}\int_{-\infty}^\infty \|\mathbf{F}(x+\iunit\omega)\|^2_{\fro}\,\mathrm{d}\omega<\infty
\]
with the inner product
\begin{equation*}
    \langle \mathbf{F},\mathbf{H}\rangle_{\hardy_2}:= \frac{1}{2\pi}\int_{-\infty}^\infty \textnormal{trace}\left\{\mathbf{F}(\iunit\omega)\mathbf{H}(\iunit\omega)^*\right\} \mathrm{d}\omega,
\end{equation*}
and the corresponding norm 
\begin{equation*}
    \|\mathbf{F}\|_{\hardy_2}:= \left(\frac{1}{2\pi}\int_{-\infty}^\infty \|\mathbf{F}(\iunit\omega)\|^2_\fro\, \mathrm{d}\omega\right)^{\frac{1}{2}}.
\end{equation*}

In the cases studied in this paper, $\mathbf{F}$ is analytic in $\bar{\A}^{\mathsf{c}}$. Here, $\bar{\A}^{\mathsf{c}}$ does not necessarily have to be the open right half complex plane, meaning that $\mathbf{F}$ is not necessarily in $\hardy_2$. 
Due to this obstacle, we adopt the framework introduced in \cite{BorBre23} (see also \cite[Chapter 10]{Dur70}). Here, the classical $\mathcal{H}_2$ space is replaced by the space consisting of all the functions $\mathbf{F}$ for which $(\mathbf{F}\circ\psi(\cdot))\psi'(\cdot)^{\frac{1}{2}}\in\hardy_2$, where $\psi$ is a given conformal map. 
We generalize the definition of this space given in \cite{BorBre23} for matrix valued functions. 
\begin{definition}[$\mathcal{H}_2(\bar{\mathbb{A}}^{\mathsf{c}})$ space, \cite{BorBre23}] \label{def:H2A}
	 Let $ \mathbf{F}\colon\bar{\mathbb{A}}^{\mathsf{c}}\rightarrow\mathbb{C}^{q\times m}$ and $  \mathbf{H}\colon\bar{\mathbb{A}}^{\mathsf{c}}\rightarrow\mathbb{C}^{q\times m}$ be analytic. 
	 Define
	 \begin{equation} \label{eq:operatorH}
	 	\mathfrak{H}_\mathbf{F}(\cdot) = \left(\mathbf{F}\circ\psi(\cdot)\right)\psi'(\cdot)^{\frac{1}{2}}.
	 \end{equation} 
	 Then the $\mathcal{H}_2(\bar{\mathbb{A}}^{\mathsf{c}})$ inner product is defined as 
	 \begin{equation*}
	 	\left\langle  \mathbf{F},  \mathbf{H}\right\rangle_{\mathcal{H}_2(\bar{\mathbb{A}}^{\mathsf{c}})} := \left\langle \mathfrak{H}_\mathbf{F},\mathfrak{H}_\mathbf{H}\right\rangle_{\mathcal{H}_2}
	 \end{equation*}
	with the  
    corresponding $\mathcal{H}_2(\bar{\mathbb{A}}^{\mathsf{c}})$-norm 
	\begin{equation*}
		\| \mathbf{F}\|_{\mathcal{H}_2(\bar{\mathbb{A}}^{\mathsf{c}})} := \left\|\mathfrak{H}_\mathbf{F}\right\|_{\mathcal{H}_2}=\left( \left\langle \mathfrak{H}_\mathbf{F},\mathfrak{H}_\mathbf{F}\right\rangle_{\mathcal{H}_2}\right)^{\frac{1}{2}}.
	\end{equation*}
	The space $\mathcal{H}_2(\bar{\mathbb{A}}^{\mathsf{c}})$  is defined as 
	\begin{equation*}
		\mathcal{H}_2(\bar{\mathbb{A}}^{\mathsf{c}}):=\left\{ \mathbf{F}\colon\bar{\mathbb{A}}^{\mathsf{c}}\rightarrow\mathbb{C}^{q\times m}\;\textnormal{analytic}\; \bigg\lvert \| \mathbf{F}\|_{\mathcal{H}_2(\bar{\mathbb{A}}^{\mathsf{c}})}<\infty\right\}.
	\end{equation*}
\end{definition}
\Cref{def:H2A} implies that if $\mathbf{F}\in\newhardy$ then $\mathfrak{H}_\mathbf{F}\in\hardy_2$. 
Given the particular structure of an LTI system's transfer function $\bfG$ as in \eqref{eq:tffom}, we write the corresponding operator $\mathfrak{H}_\bfG$ as 
\begin{equation}\label{eq:frakH}
\begin{aligned}
    \mathfrak{H}_\bfG(\cdot) &= (\bfG\circ\psi(\cdot))\psi'(\cdot)^{\frac{1}{2}} \\ &= \bfC\left( \psi(\cdot)\bfI - \bfA\right)^{-1}\bfB\psi'(\cdot)^{\frac{1}{2}}\\
    &= \bfC\bfK(\cdot)^{-1}\bfB,
\end{aligned}
\end{equation}
where 
\begin{equation} \label{eq:K}
\bfK(\cdot) = \psi(\cdot)\psi'(\cdot)^{-\frac{1}{2}}\bfI - \bfA\psi'(\cdot)^{-\frac{1}{2}}.
\end{equation}

\subsection{Balanced truncation} \label{sec:classicBT}
In this section, we briefly review the concept of balanced truncation (BT) for asymptotically stable LTI systems with poles in $\A = \C_-$. 
BT is a (Petrov-Galerkin) projection-based model reduction technique. In other words, it constructs two model reduction bases $\bfV_r \in \C^{n\times r} $ and $\mathbf{W}_r \in  \C^{n\times r} $ such that the state-space representation (system matrices) of the ROM in \eqref{eq:rom} is given by
\begin{equation}\label{eq:rommatrices}
    \begin{aligned}
    \bfA_r=\mathbf{W}_r^*\bfA\bfV_r,\; \bfB_r=\mathbf{W}_r^*\bfB,\;\bfC_r =  \bfC\bfV_r.
    \end{aligned}
\end{equation}
BT chooses $\mathbf{V}_r$ and $\mathbf{W}_r$ to eliminate 
hard-to-reach and hard-to-observe states of the original FOM in \eqref{eq:fom} \cite[Section 7.1]{Ant05}. The computation of $\mathbf{V}_r$ and $\mathbf{W}_r$ depends on the controllability and observability Gramians of \eqref{eq:fom}, denoted by $\ctrlg$ and $\obsg$, respectively. For a minimal system, these Gramians are the symmetric {positive definite} unique solutions to the Lyapunov equations 
\begin{equation}\label{eq:classicLyap}
    \bfA\ctrlg + \ctrlg\bfA^* = -\bfB\bfB^*, \quad \bfA^*\obsg + \obsg\bfA = -\bfC^*\bfC.
\end{equation}
The Gramians $\ctrlg$ and $\obsg$ given as the solutions to the Lyapunov equations~\eqref{eq:classicLyap} can be equivalently defined as integrals in the frequency domain, namely
\begin{align}
    \ctrlg &= \frac{1}{2\pi}\int_{-\infty}^{\infty}(\iunit\omega\mathbf{I}-\mathbf{A}  )^{-1}\bfB\bfB^*(\iunit\omega\bfI-\mathbf{A})^{-*} \mathrm{d}\omega, \label{eq:classicPfreq} \\
    \obsg &= \frac{1}{2\pi}\int_{-\infty}^{\infty}(\iunit\omega\bfI-\mathbf{A})^{-*}\bfC^*\bfC(\iunit\omega\bfI-\mathbf{A})^{-1}\mathrm{d}\omega. \label{eq:classicQfreq} 
\end{align}
These frequency domain definitions will play a crucial role in our development of BT via conformal maps in~\Cref{sec:3}.

In practice, one does not solve~\eqref{eq:classicLyap} for $\ctrlg$ and $\obsg$. Instead one solves for their square-root factors. More precisely, 
let $\ctrlg=\mathbf{U}\mathbf{U}^*$ and $\obsg = \mathbf{L}\mathbf{L}^*$ be Cholesky decompositions. The existence of $\mathbf{U}$ and $\mathbf{L}$ is guaranteed via the positive definiteness of $\ctrlg$ and $\obsg$. Then,
one solves~\eqref{eq:classicLyap} directly for $\mathbf{U}$ and $\mathbf{L}$. We refer the reader to \cite{BreS21} for details.  
Let 
\[
\mathbf{U}^*\mathbf{L}=\mathbf{Z}\bfSigma\mathbf{Y}^*=\begin{bmatrix}
    \mathbf{Z}_r & \mathbf{Z}_2
\end{bmatrix}
\begin{bmatrix}
    \mathbf{\Sigma}_1  & \\ & \mathbf{\Sigma}_2 
\end{bmatrix}
\begin{bmatrix}
    \mathbf{Y}_r^* \\ \mathbf{Y}_2^*
\end{bmatrix}, 
\]
be the singular value decomposition where the entries of $\bfSigma$ are called the Hankel singular values of the FOM. Here, $\mathbf{\Sigma}_1 \in\R^{r\times r}$ contains the dominant $r$ singular values, and $\mathbf{Z}_r\in\C^{n\times r}$ and $\mathbf{Y}_r\in\C^{n\times r}$ are corresponding left and right singular vectors. BT then constructs the model reduction matrices as $\mathbf{W}_r=\mathbf{L}\mathbf{Y}_r\mathbf{S}_1^{-1/2}$ and $\mathbf{V}_r=\mathbf{U}\mathbf{Z}_r\mathbf{S}_1^{-1/2}$, which are then used to construct the matrices in \eqref{eq:rommatrices} for the reduced system in \eqref{eq:rom}.

ROM~\eqref{eq:rom} via BT has important advantages. Firstly, $\bfG_r$ in \eqref{eq:tfrom} is asymptotically stable and $\|\bfG-\bfG_r\|_{\hardy_\infty}\leq2\textnormal{trace}\{\mathbf{\Sigma}_2\}$, where  $\|\mathbf{F}\|_{\hardy_\infty}\colon=\sup_{\omega\in\R}\|\mathbf{F}(\iunit\omega)\|_{2}$
denotes the $\hardy_\infty$ norm (see also \cite[Theorem 7.9]{Ant05}). Secondly, there also exists a bound on the $\hardy_2$ error norm~\cite[Section 7.2.2]{Ant05}. We give a brief summary below. Consider the \textit{balanced} realization of $\bfG$ as $\bfG(\cdot) = \bfC_\mathcal{B}(\cdot\bfI - \bfA_{\mathcal{B}})^{-1}\bfB_\mathcal{B}$, i.e., a state-space realization of $\bfG$ such that
\[
\ctrlg=\obsg=\textnormal{diag}(\sigma_1\dots\sigma_n)=\begin{bmatrix}
    \mathbf{\Sigma}_1 & \\ & \mathbf{\Sigma}_2
\end{bmatrix}.
\]
Let $\mathbf{A}_\mathcal{B}$ and $\mathbf{B}_\mathcal{B}$ be partitioned accordingly as 
\begin{equation*}
    \mathbf{A}_\mathcal{B} = \begin{bmatrix}
        \bfA_{11} & \bfA_{12}\\ \bfA_{21} & \bfA_{22}
    \end{bmatrix},\quad \mathbf{B}_\mathcal{B} = \begin{bmatrix}
        \bfB_{1} \\ \bfB_{2}
    \end{bmatrix},
\end{equation*}
and let 
\[
\bfG_2(\cdot) = \bfA_{12}\bfSigma_2\left(\cdot\bfI - \bfA_{22} - \bfA_{21}(\cdot\bfI-\bfA_{11})^{-1}\bfA_{12}\right)^{-1}\bfSigma_2\bfA_{21}.
\]
Then it holds 
\[
\|\bfG-\bfG_r\|_{\hardy_2}^2\leq \textnormal{trace}\{\bfC_2\bfSigma_2\bfC_2^*\} + 2\kappa\|\mathbf{G}_2\|_{\hardy_\infty},
\]
for some $\kappa\in\R$. 
In \cref{th:H2errorbound}  below, one of our main results, we utilize ideas from \cite{SorAnt05} for structured systems to derive an analogous bound for our BT via conformal mapping framework as well.

\section{Balanced truncation with conformal maps}  \label{sec:3}
We now consider $\bfG\in\newhardy$ structured as in \eqref{eq:tffom} (with poles in 
$\A\subset\C$) and extend the concept of BT to these systems. Since the Gramians are the main ingredient of BT, we first need to define them for  $\bfG\in\newhardy$.  

\subsection{Defining the Gramians}
For $\bfG\in\newhardy$, we recall that, for the conformally mapped function $\mathfrak{H}_\bfG$ 
with its \emph{state-space} representation as in~\eqref{eq:frakH}, it holds that $\mathfrak{H}_\bfG\in\hardy_2$. Then, 
inspired by the frequency domain representation of the Gramians in~\eqref{eq:classicPfreq}
and~\eqref{eq:classicQfreq} for the classical case of $\bfG(\cdot) = \mathbf{C}(\cdot \mathbf{I}-\mathbf{A})^{-1}\mathbf{B}$ and by the Gramians defined for integro-differential equations in~\cite{Bre16}, 
we define the controllability and observability Gramians with respect to $\mathfrak{H}_\bfG(\cdot) = \bfC\bfK(\cdot)^{-1}\bfB$ as
\begin{align}
    \ctrlg &= \frac{1}{2\pi}\int_{-\infty}^{\infty}\bfK(\iunit\omega)^{-1}\bfB\bfB^*\bfK(\iunit\omega)^{-*} \mathrm{d}\omega, \label{eq:ctrlgint}\\
    \obsg &= \frac{1}{2\pi}\int_{-\infty}^{\infty}\bfK(\iunit\omega)^{-*}\bfC^*\bfC\bfK(\iunit\omega)^{-1}\mathrm{d}\omega.\label{eq:obsgint}
\end{align}
where $\mathbf{K}(\cdot)$ is as defined in~\eqref{eq:K}.

Unlike the classical BT case for asymptotically stable LTI systems where the Gramians 
can be computed as solutions to the Lyapunov equations~\eqref{eq:classicLyap},  the newly defined Gramians in~\eqref{eq:ctrlgint} and~\eqref{eq:obsgint} cannot be obtained easily
for general conformal maps $\psi$.  
In these general cases, one can compute an approximation of
$\ctrlg$ and $\obsg$ through numerical quadrature. For example, the approximate controllability Gramian can be computed as
\begin{align}
    \ctrlg \approx \tilde{\mathbf{X}}_c &= \sum_{j=1}^{N}w_j\bfK(\iunit p_j)^{-1}\bfB\bfB^*\bfK(\iunit p_j)^{-*}, \label{eq:gramquad}
\end{align}
where $w_j$ and $p_j$ are the quadrature weights and nodes respectively. The observability
Gramian can be approximated similarly.
Even though the quadrature-based approximation \eqref{eq:gramquad} to the Gramians will be employed for general conformal maps, we will show in the next section, more specifically in \cref{th:mobiuslyap}, that for a particular type of conformal mapping, namely the M\"obius transformation, 
the Gramians in \eqref{eq:ctrlgint} and \eqref{eq:obsgint} 
 can still be computed by solving a modified Lyapunov equation.

\subsection{Lyapunov equations}
We start the section with a result on the uniqueness of the solution to a specific Lyapunov-like equation.

\begin{lemma}[Unique solution]\label{lemma:uniqueness} Consider the domains $\A\subset\C$ and $\mathbb{X}\subseteq\C_-$. Let the matrix $\bfA\in\C^{n\times n}$ have the
eigendecomposition {$\bfA=\bfV\mathbf{\Lambda}\bfV^{-1}$}, with $\mathbf{\Lambda}=\textnormal{diag}(\lambda_1,\dots,\lambda_n)$, and $\lambda_j\in\A$, for $j=1,\dots,n$, are distinct. Let $f\colon\A\rightarrow\mathbb{X}$ be analytic.
Then the Lyapunov equation
\begin{equation}\label{eq:lyapunique}
    f(\bfA)\mathbf{P} + \mathbf{P}f(\bfA)^* = \mathbf{Q},
\end{equation}
with $\mathbf{Q} = \mathbf{Q}^*  \in\C^{n\times n}$ , has a unique solution $\mathbf{P}$.
\end{lemma}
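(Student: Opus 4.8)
The plan is to read \eqref{eq:lyapunique} as a Sylvester-type equation and to obtain existence and uniqueness simultaneously from the standard spectral solvability criterion. Writing $\mathbf{M} := f(\bfA)$, the equation becomes $\mathbf{M}\mathbf{P} + \mathbf{P}\mathbf{M}^* = \mathbf{Q}$, which I would vectorize using the Kronecker identity $\operatorname{vec}(\mathbf{R}\mathbf{X}\mathbf{S}) = (\mathbf{S}^\top \otimes \mathbf{R})\operatorname{vec}(\mathbf{X})$ into the equivalent linear system $(\mathbf{I}\otimes\mathbf{M} + \overline{\mathbf{M}}\otimes\mathbf{I})\operatorname{vec}(\mathbf{P}) = \operatorname{vec}(\mathbf{Q})$. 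A unique $\mathbf{P}$ then exists for every right-hand side if and only if this Kronecker sum is nonsingular, i.e.\ if and only if $0$ is not one of its eigenvalues.

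Next I would identify those eigenvalues explicitly. Since $\bfA = \bfV\mathbf{\Lambda}\bfV^{-1}$ has distinct eigenvalues $\lambda_j\in\A$ and $f$ is analytic on $\A$, the matrix-function convention used throughout the paper gives $\mathbf{M} = f(\bfA) = \bfV f(\mathbf{\Lambda})\bfV^{-1}$, so the eigenvalues of $\mathbf{M}$ are exactly $f(\lambda_1),\dots,f(\lambda_n)$, while those of $\overline{\mathbf{M}}$ (equivalently of $\mathbf{M}^*$) are their conjugates $\overline{f(\lambda_1)},\dots,\overline{f(\lambda_n)}$. By the eigenvalue formula for a Kronecker sum, the eigenvalues of $\mathbf{I}\otimes\mathbf{M} + \overline{\mathbf{M}}\otimes\mathbf{I}$ are therefore precisely the pairwise sums $f(\lambda_i) + \overline{f(\lambda_j)}$ for $1\le i,j\le n$.

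The \emph{crux}, and the only place the hypotheses enter, is to show that none of these sums vanishes. Here I would invoke the assumption that $f$ maps $\A$ into $\mathbb{X}\subseteq\C_-$: each value $f(\lambda_j)$ has strictly negative real part, whence $\re\!\big(f(\lambda_i) + \overline{f(\lambda_j)}\big) = \re f(\lambda_i) + \re f(\lambda_j) < 0$ for all $i,j$. In particular every such sum is nonzero, the Kronecker sum is nonsingular, and \eqref{eq:lyapunique} admits a unique solution $\mathbf{P}$. I expect no genuine obstacle: the distinctness of the $\lambda_j$ only serves to make the diagonalization of $f(\bfA)$ immediate, and Hermiticity of $\mathbf{Q}$ plays no role in the uniqueness itself (it merely guarantees that the unique $\mathbf{P}$ is Hermitian, as one sees afterwards by conjugate-transposing \eqref{eq:lyapunique} and appealing to uniqueness). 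The single point deserving care is the spectral solvability criterion, which I would either cite or derive in one line from the Kronecker-sum eigenvalue formula recorded above.
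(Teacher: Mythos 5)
Your proof is correct and follows essentially the same route as the paper's: both arguments reduce uniqueness to the spectral non-resonance condition that the eigenvalues of $f(\bfA)$ and $-f(\bfA)^*$ never coincide (equivalently, $f(\lambda_i)+\overline{f(\lambda_j)}\neq 0$), verified by noting that $f(\lambda_j)\in\mathbb{X}\subseteq\C_-$ forces $\re\{f(\lambda_i)+\overline{f(\lambda_j)}\}<0$. The only difference is cosmetic: the paper cites the Sylvester solvability criterion from \cite[Proposition 6.2]{Ant05}, whereas you re-derive it on the spot via Kronecker vectorization, which is the standard proof of that cited result.
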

\begin{proof}
The main idea behind this result is the fact that the Sylvester equation
\[
\mathbf{Z}\mathbf{P} + \mathbf{P}\mathbf{Y} = \mathbf{Q},
\]
where $\mathbf{Z} \in \C^{n_Z \times n_Z}$,
$\mathbf{Y} \in \C^{n_Y \times n_Y}$, 
$\mathbf{Q} \in \C^{n_Z \times n_Y}$, 
has a unique solution $\mathbf{P}\in \C^{n_Z \times n_Y} $ if and only if  $\mathbf{Z}$ and $-\mathbf{Y}$ do not share any eigenvalues (see \cite[Proposition 6.2]{Ant05}). In our case we have $\mathbf{Z}=f(\bfA)$ and $\mathbf{Y} = f(\bfA)^*$. Given the eigendecomposition of $\bfA$, we can then write 
\begin{equation} \label{eq:fA}
f(\bfA) = \bfV f(\mathbf{\Lambda})\bfV^{-1}=\bfV \begin{bmatrix}
    f(\lambda_1) & & \\
    & \ddots & \\
    & &  f(\lambda_n)
\end{bmatrix}\bfV^{-1},
\end{equation}
(see also \cite[Definition 1.2]{Hig08}). 
Since $\mathbf{\Lambda}\in\A$  we have that $f(\mathbf{\Lambda})\in\mathbb{X}$ with $\mathbb{X}\subseteq\C_-$. Given that the eigenvalues of $f(\bfA)$ are the mirror images of the eigenvalues of $-f(\bfA)^*$ with respect to the imaginary axis, 
then, the two matrices $f(\bfA)$ and $-f(\bfA)^*$ do not share any eigenvalues and thus~\eqref{eq:lyapunique} has a unique solution.
\end{proof}
\begin{remark}\label{remark:schur}
One can prove \cref{lemma:uniqueness} 
without the diagonalizability (and the simple eigenvalues) assumption on $\bfA$. It is avoided here since the notation and presentation becomes rather cumbersome. Next we briefly explain how the argument goes in that case. 
Let $\bfA=\mathbf{U}\mathbf{T}\mathbf{U}^*$ 
be the Schur decomposition  of $\bfA$
where $\mathbf{U}$ is a unitary matrix and $\mathbf{T}$ is an upper triangular matrix. The computation of the matrix function $f(\bfA)$ in~\eqref{eq:fA} can then be carried out, e.g., following the approach discussed in \cite{DavHi03}. Applying \cite[Algorithm 5.1]{DavHi03} results in $f(\bfA)=\mathbf{F}=\mathbf{U}\mathbf{N}\mathbf{U}^*$ where $\mathbf{N}$ is an upper triangular matrix with $f(\lambda_i)$, $i=1,\dots,n$ as the diagonal entries. Since $\mathbf{N}$ and $\mathbf{F}$ are similar, the eigenvalues of $\mathbf{F}$ are given by $f(\lambda_i)$ for $i=1,\dots,n$ which, then, would allow to apply similar arguments as in the  proof of \cref{lemma:uniqueness}. 
\end{remark}

We now show that the newly defined Gramians in \eqref{eq:ctrlgint} and \eqref{eq:obsgint} for 
$\mathfrak{H}_\bfG$ in~\eqref{eq:frakH} 
solve (modified) Lyapunov equations of the form~\eqref{eq:lyapunique} when the M\"obius transformation is adopted in the mapping from $\bfG$ to $\mathfrak{H}_\bfG$.
\begin{theorem}\label{th:mobiuslyap} Consider the transfer function $\bfG\in\newhardy$ with poles $\lambda_j\in\A, \; j=1,\dots,n$, and the M\"obius transformation $m(\cdot) = (\alpha\cdot + \beta)/(\gamma\cdot + \delta)$ in \eqref{eq:mobiustrans} such that $m\colon \C_-\rightarrow \A$ and $m\colon \iunit\R\rightarrow\partial\A^+$. Then the controllability and observability Gramians $\ctrlg,\obsg$ in~\eqref{eq:ctrlgint} and~\eqref{eq:obsgint}
are the unique solutions of the Lyapunov equations
\begin{align}
m^{-1}(\bfA)\ctrlg + \ctrlg m^{-1}(\bfA)^* &= -\mathbf{Q}_c,~\mbox{and} \label{eq:lyapcon}\\
\obsg m^{-1}(\bfA) + m^{-1}(\bfA)^*\obsg &= -\mathbf{Q}_o, \label{eq:lyapobs}
\end{align}
where $m^{-1}(\bfA)$ is defined as in \eqref{eq:mobiustransmatrix}, and
\begin{align*}
    \mathbf{Q}_c&=|\alpha\delta-\beta\gamma|(\alpha\bfI- \gamma\bfA)^{-1}\mathbf{B}\mathbf{B}^*(\alpha\bfI- \gamma\bfA)^{-*},  \\
    \mathbf{Q}_o&=|\alpha\delta-\beta\gamma|(\alpha\bfI- \gamma\bfA)^{-*}\mathbf{C}^* \mathbf{C}(\alpha\bfI- \gamma\bfA)^{-1}. \\
\end{align*}
\end{theorem}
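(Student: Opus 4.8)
The plan is to reduce the statement to the classical Lyapunov/frequency-domain correspondence in \eqref{eq:classicPfreq}--\eqref{eq:classicLyap} by rewriting $\bfK(\iunit\omega)^{-1}$ in terms of the transformed matrix $m^{-1}(\bfA)$. Writing $\Delta := \alpha\delta-\beta\gamma$, the M\"obius derivative is $m'(s)=\Delta/(\gamma s+\delta)^2$, so $m'(s)^{1/2}=\Delta^{1/2}/(\gamma s+\delta)$ for a fixed branch of the square root (well defined on $\iunit\R$ since $m'$ has no zeros there, consistent with \cref{assumption:1}.3). Substituting into $\bfK$ from \eqref{eq:K} and clearing the factor $(\gamma s+\delta)$ gives $\bfK(s)^{-1}=\Delta^{1/2}\big((\alpha s+\beta)\bfI-(\gamma s+\delta)\bfA\big)^{-1}$. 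First I would factor $(\alpha\bfI-\gamma\bfA)$ out on the right: since all matrices appearing are rational functions of $\bfA$ and hence commute, the matrix identity $(\alpha\bfI-\gamma\bfA)^{-1}(\beta\bfI-\delta\bfA)=-m^{-1}(\bfA)$ --- which follows directly from \eqref{eq:mobiustransmatrix} --- yields the key factorization
\[
\bfK(s)^{-1}=\Delta^{1/2}\,(s\bfI-m^{-1}(\bfA))^{-1}(\alpha\bfI-\gamma\bfA)^{-1}.
\]

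Next I would insert this factorization into the Gramian integrals \eqref{eq:ctrlgint} and \eqref{eq:obsgint} at $s=\iunit\omega$. Taking conjugate transposes and collecting the scalar prefactor $\Delta^{1/2}\,\overline{\Delta^{1/2}}=|\Delta|$, the controllability integrand becomes $|\Delta|\,(\iunit\omega\bfI-m^{-1}(\bfA))^{-1}(\alpha\bfI-\gamma\bfA)^{-1}\bfB\bfB^*(\alpha\bfI-\gamma\bfA)^{-*}(\iunit\omega\bfI-m^{-1}(\bfA))^{-*}$, which is exactly the classical frequency-domain controllability Gramian of the surrogate pair $(m^{-1}(\bfA),\,(\alpha\bfI-\gamma\bfA)^{-1}\bfB)$ with weight $\mathbf{Q}_c=|\Delta|\,(\alpha\bfI-\gamma\bfA)^{-1}\bfB\bfB^*(\alpha\bfI-\gamma\bfA)^{-*}$. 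For the observability Gramian the same substitution leaves the factors $(\alpha\bfI-\gamma\bfA)^{-*}$ and $(\iunit\omega\bfI-m^{-1}(\bfA))^{-*}$ sandwiched on the outside; here I would use that these two commute (both are rational functions of $\bfA^*$) to regroup the integrand into the classical observability form with weight $\mathbf{Q}_o=|\Delta|\,(\alpha\bfI-\gamma\bfA)^{-*}\bfC^*\bfC(\alpha\bfI-\gamma\bfA)^{-1}$.

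Finally I would invoke the classical equivalence between the frequency-domain Gramian integrals \eqref{eq:classicPfreq}--\eqref{eq:classicQfreq} and the Lyapunov equations \eqref{eq:classicLyap}, now applied to the surrogate system with state matrix $m^{-1}(\bfA)$, to conclude that $\ctrlg$ and $\obsg$ solve \eqref{eq:lyapcon} and \eqref{eq:lyapobs}. This step requires $m^{-1}(\bfA)$ to be asymptotically stable, which holds because its eigenvalues are $m^{-1}(\lambda_j)$ and $m$ maps $\C_-$ bijectively onto $\A$, so $m^{-1}(\lambda_j)\in\C_-$ for every pole $\lambda_j\in\A$. Uniqueness of the solution then follows from \cref{lemma:uniqueness} applied with $f=m^{-1}$, which is analytic from $\A$ into $\C_-$.

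The main obstacle is not any single calculation but keeping the non-commutative bookkeeping honest: the factorization of $\bfK^{-1}$ and, especially, the regrouping in the observability integrand rely on the fact that $(\alpha\bfI-\gamma\bfA)$, $m^{-1}(\bfA)$, and the resolvents of $m^{-1}(\bfA)$ are all rational functions of $\bfA$ (respectively $\bfA^*$) and therefore commute; I would make this commutativity explicit before rearranging. A secondary point is to fix a branch of $\Delta^{1/2}$ consistently so that only $|\Delta|$ survives in $\mathbf{Q}_c,\mathbf{Q}_o$, and to confirm that $m'(\iunit\omega)\neq 0$ on the integration contour so that $\mathfrak{H}_\bfG$ in \eqref{eq:frakH} genuinely lies in $\hardy_2$ and the integrals converge.
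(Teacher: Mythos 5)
Your proposal is correct and takes essentially the same route as the paper's proof: your factorization $\bfK(s)^{-1}=(\alpha\delta-\beta\gamma)^{1/2}\left(s\bfI-m^{-1}(\bfA)\right)^{-1}(\alpha\bfI-\gamma\bfA)^{-1}$ is precisely the paper's identity \eqref{eq:Rinvnew}, the reduction to the standard frequency-domain/Lyapunov correspondence for the Hurwitz matrix $m^{-1}(\bfA)$ matches the paper's Plancherel step, and uniqueness via \cref{lemma:uniqueness} with $f=m^{-1}$ is identical. Your explicit commutativity bookkeeping for the observability integrand merely spells out what the paper compresses into ``similar arguments apply.''
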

\begin{proof}
We first focus on the controllability Gramian. For the specific mapping $m$, the Gramian in \eqref{eq:ctrlgint} is given by
\begin{align*}
\ctrlg
&= \frac{1}{2\pi}\int_{-\infty}^{\infty}(m(\mathrm{i}\omega)\bfI-\bfA)^{-1}\mathbf{B}\left(m'(\mathrm{i}\omega)^{1/2}\right) \\
&\hspace{2.5cm}\left(m'(\mathrm{i}\omega)^{1/2}\right)^*\mathbf{B}^*\left(m(\mathrm{i}\omega)\mathbf{I}-\mathbf{A}\right)^{-*}\mathrm{d}\omega,
\end{align*}
where 
\[
m'(\cdot) = \frac{\alpha\delta - \beta\gamma}{(\gamma \cdot +\delta)^2},
\]
is the derivative of $m$.
We then obtain 
\begin{align} \nonumber
	&\ctrlg =\\
    \; &= \frac{1}{2\pi}\int_{-\infty}^{\infty}\left(m(\iunit\omega)\bfI-\bfA\right)^{-1}\mathbf{B}\frac{|\alpha\delta-\beta\gamma|}{(\gamma \iunit\omega+\delta)(\gamma \iunit\omega+\delta)^*} \nonumber \\
    & \hspace{5cm}\mathbf{B}^*\left(m(\iunit\omega)\bfI-\bfA\right)^{-*}\mathrm{d}\omega \nonumber \\	
	\; &= \frac{1}{2\pi}\int_{-\infty}^{\infty}\mathbf{R}(\iunit\omega)^{-1}\mathbf{B}\mathbf{B}^*\mathbf{R}(\iunit\omega)^{-*}|\alpha\delta-\beta\gamma|\mathrm{d}\omega, \label{eq:Pcnew}
\end{align}
where $\mathbf{R}(\cdot)^{-1}=\left(\left(\alpha \cdot+\beta\right)\bfI-(\gamma\cdot + \delta)\bfA\right)^{-1}$. We rewrite  $\mathbf{R}(\cdot)^{-1}$ using the manipulations
\begin{align}
	\mathbf{R}(\cdot)^{-1} &= \left(\cdot(\alpha\bfI- \gamma\bfA)+\beta\bfI -\delta\bfA)\right)^{-1} \nonumber \\
	& = \left(\cdot\bfI-(\beta\bfI-\delta\bfA)(\gamma\bfA-\alpha\bfI)^{-1}\right)^{-1}(\alpha\bfI- \gamma\bfA)^{-1} \nonumber \\
	& = \left(\cdot\bfI - m^{-1}(\bfA)\right)^{-1}(\alpha\bfI- \gamma\bfA)^{-1}. \label{eq:Rinvnew}
\end{align}
Define $\mathbf{Q}_c=|\alpha\delta-\beta\gamma|(\alpha\bfI- \gamma\bfA)^{-1}\mathbf{B} \mathbf{B}^*(\alpha\bfI- \gamma\bfA)^{-*}$. Then,
using~\eqref{eq:Rinvnew} in~\eqref{eq:Pcnew},
the Gramian $\ctrlg$ becomes
\[
	\ctrlg = \frac{1}{2\pi}\int_{-\infty}^{\infty}(\iunit\omega\bfI - m^{-1}(\bfA))^{-1}\mathbf{Q}_c(\iunit\omega\bfI - m^{-1}(\bfA))^{-*}\mathrm{d}\omega.	
\]
Applying Plancherel's theorem results in
\begin{equation}\label{eq:gramianmobius}
	\ctrlg = \int_{0}^{\infty}e^{m^{-1}(\bfA)t}\mathbf{Q}_ce^{m^{-1}(\bfA)^*t}\mathrm{d}t.	
\end{equation}
Since $m\colon\C_-\rightarrow \A$, the eigenvalues of $m^{-1}(\bfA)$ are in the open left-half complex plane so that the exponential term in \eqref{eq:gramianmobius} vanishes for $t\rightarrow\infty$.
As in the standard case, we therefore obtain 
\begin{align*}
	&m^{-1}(\bfA)\ctrlg + \ctrlg m^{-1}(\bfA)^* =
	\\ &\hspace{2cm} =\int_{0}^{\infty}\frac{\mathrm{d}}{\mathrm{d}t}e^{m^{-1}(\bfA)t}\mathbf{Q}_ce^{m^{-1}(\bfA)^*t}\mathrm{d}t = -\mathbf{Q}_c,
\end{align*}
which is the Lyapunov equation \eqref{eq:lyapcon}. Since $m^{-1}\colon\A\rightarrow\C_-$, the uniqueness of $\ctrlg$ follows from \cref{lemma:uniqueness}. Similar arguments apply to prove~\eqref{eq:lyapobs}.
\end{proof}
We now have all the tools to develop the conformal BT algorithm that handles systems with transfer functions of the kind $\bfG\in\newhardy$. 
In \cref{alg:conformalbt} we provide a pseudocode of the proposed algorithm, called \texttt{conformalBT}.
The major difference from classical BT is that the Gramians are defined (via the conformal mapping) with respect to $\mathfrakH_\bfG$ and not $\bfG$ (see \eqref{eq:ctrlgint} and \eqref{eq:obsgint}). However, the resulting projection matrices are then applied directly to the system matrices of the original system $\bfG$ and not $\mathfrakH_\bfG$.
After studying the theoretical properties of \texttt{conformalBT}
in the next section, we will illustrate its performance numerically in
\Cref{sec:5}. We note that
the pseucode first computes the Gramians and then the Cholesky factors in order to keep the presentation to align with the analytical development. In practice one would compute the (approximate) Cholesky factors directly without ever forming  $\ctrlg$ and $\obsg$.
\begin{algorithm}
\begin{algorithmic}[1]
\REQUIRE{FOM $(\bfA,\bfB,\bfC)$, conformal map $\psi$, reduced order $r<n$}
\IF{$\psi$ is a M\"obius transformation as in \cref{th:mobiuslyap}}
\STATE{Solve \eqref{eq:lyapcon} and \eqref{eq:lyapobs} to get $\ctrlg$ and $\obsg$}
\ELSE
\STATE{Compute $\ctrlg$ and $\obsg$ by approximating \eqref{eq:ctrlgint} and \eqref{eq:obsgint}}
\ENDIF
\STATE{Compute Cholesky factorizations $\ctrlg=\mathbf{U}\mathbf{U}^*$ and $\obsg=\mathbf{L}\mathbf{L}^*$}
\STATE{Compute SVD of $\mathbf{U}^*\mathbf{L}$ and partition it as follows 
\[
\mathbf{U}^*\mathbf{L}=\begin{bmatrix}
    \mathbf{Z}_r & \mathbf{Z}_2
\end{bmatrix}
\begin{bmatrix}
    \bfSigma_1 & \\ & \bfSigma_2
\end{bmatrix}
\begin{bmatrix}
    \mathbf{Y}_r^* \\ \mathbf{Y}_2^*
\end{bmatrix}
\]}
\STATE{Compute $\mathbf{W}_r=\mathbf{L}\mathbf{Y}_r\bfSigma_1^{-1/2}$, $\mathbf{V}_r=\mathbf{U}\mathbf{Z}_r\bfSigma_1^{-1/2}$}
\STATE{Compute the reduced system matrices $$\bfA_r= \mathbf{W}_r^*\bfA \mathbf{V}_r, \quad \bfB_r=\mathbf{W}_r^*\bfB, \quad \bfC_r=\bfC\mathbf{V}_r$$}
\RETURN $\bfA_r,\bfB_r,\bfC_r$
\caption{Conformal balanced truncation (\texttt{conformalBT})} \label{alg:conformalbt}
\end{algorithmic}
\end{algorithm}

\section{Stability preservation and $\mathcal{H}_2$ error bound}\label{sec:4}

In this section, we discuss properties of reduced models obtained by \cref{alg:conformalbt}. In particular, for the specific case of a M\"obius transformation, we show preservation of stability and for the general case, we discuss an $\mathcal{H}_2$-type error bound. We begin by relating the range of a specific M\"obius transformation to a Hermitian polynomial.  
\begin{lemma}\label{lemma:polybarA}
    Consider the M\"obius transformation~ \eqref{eq:mobiustrans} such that $m\colon\C_-\rightarrow\A$ with pole on the right half plane. Define the polynomial $h$  as  \begin{equation*}
        h(z) = 
    \begin{bmatrix}
        1 & z 
    \end{bmatrix}
    \begin{bmatrix}
    \beta\alpha^* + \beta^*\alpha & (-\delta\alpha^* - \gamma\beta^*)^*\\
        -\delta\alpha^* - \gamma\beta^* & \delta\gamma^*+\delta^*\gamma 
    \end{bmatrix}
    \begin{bmatrix}
        1 \\ z^* 
    \end{bmatrix}.
    \end{equation*}
    Then it holds that 
    \begin{equation*}
        \mathbb{S} :=  \left\{z\in\C \big| h(z)>0\right\}=\A.
    \end{equation*}
\end{lemma}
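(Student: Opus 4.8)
The plan is to reduce the polynomial inequality $h(z)>0$ to the half-plane condition $\re(m^{-1}(z))<0$ and then invoke the bijectivity of $m$. First I would carry out the matrix product defining $h$ to obtain the explicit scalar expression
\[
h(z) = (\beta\alpha^* + \beta^*\alpha) - (\delta\alpha^* + \gamma\beta^*)z - (\delta^*\alpha + \gamma^*\beta)z^* + (\delta\gamma^* + \delta^*\gamma)|z|^2,
\]
noting that the two off-diagonal entries of the middle matrix are conjugates of one another, so that $h$ is real-valued and is a genuine Hermitian quadratic form in $z$.

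Next, recalling the inverse transformation $m^{-1}(z) = (\beta - \delta z)/(\gamma z - \alpha)$ from \eqref{eq:mobiustrans}, I would write its real part as
\[
\re\big(m^{-1}(z)\big) = \frac{\re\!\left((\beta - \delta z)(\gamma z - \alpha)^*\right)}{|\gamma z - \alpha|^2},
\]
valid for every $z \neq \alpha/\gamma$. The central computation is then the algebraic identity
\[
h(z) = -2\,\re\!\left((\beta - \delta z)(\gamma z - \alpha)^*\right) = -2\,|\gamma z - \alpha|^2\,\re\big(m^{-1}(z)\big).
\]
This is verified by expanding the product $(\beta - \delta z)(\gamma z - \alpha)^*$ and collecting the constant, the linear terms in $z$ and $z^*$, and the $|z|^2$ term, then matching each against the expansion of $h$ coefficient by coefficient; the match for the linear terms is immediate once one uses $\gamma\beta^* = \beta^*\gamma$ and $\gamma^*\beta = \beta\gamma^*$. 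I expect this coefficient bookkeeping to be the only real (though entirely routine) obstacle, since it is the sole place where the precise entries of the defining matrix must be tracked.

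With the identity in hand, the conclusion is a short sign chase. Because $|\gamma z - \alpha|^2 > 0$ for all $z \neq \alpha/\gamma$, the sign of $h(z)$ is opposite to that of $\re(m^{-1}(z))$, and therefore
\[
h(z) > 0 \iff \re\big(m^{-1}(z)\big) < 0 \iff m^{-1}(z) \in \C_- \iff z \in m(\C_-) = \A,
\]
where the final equivalence uses that $m\colon\C_-\to\A$ is a bijection with inverse $m^{-1}$; the hypothesis that the pole of $m$ lies in the right half plane is what pins down $\A = m(\C_-)$ as the bounded image at hand rather than its complement. It remains to dispose of the excluded point $z = \alpha/\gamma = m(\infty)$: the identity gives $h(\alpha/\gamma) = 0$, so this point does not belong to $\mathbb{S}$, consistent with $m(\infty) \in \partial\A$ lying on the boundary and hence outside the open set $\A$. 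Combining the two cases yields $\mathbb{S} = \A$, as claimed.
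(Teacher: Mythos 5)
Your proposal is correct and follows essentially the same route as the paper: your identity $h(z) = -2\,|\gamma z-\alpha|^2\,\re\{m^{-1}(z)\}$ is exactly the paper's key computation $h(m(s)) = -\tfrac{|\alpha\delta-\beta\gamma|^2}{|\gamma s+\delta|^2}(s+s^*)$ rewritten under the substitution $s=m^{-1}(z)$ (note $\gamma m(s)-\alpha = (\gamma\beta-\alpha\delta)/(\gamma s+\delta)$), and both arguments then reduce $h(z)>0$ to the half-plane condition $\re\{m^{-1}(z)\}<0$ and dispose of the exceptional point $z=\alpha/\gamma$ via $h(\alpha/\gamma)=0$. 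The only cosmetic difference is that the paper organizes the conclusion as two separate inclusions using the forward parametrization $z=m(s)$, whereas you run a single chain of equivalences through the inverse map.
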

\begin{proof}
    ``$\subseteq$''
    Consider $z\in \mathbb{S}$.
    Note that we may also consider $m$ as a bijective mapping from $\C\backslash\{-\tfrac{\delta}{\gamma}\}$ to $ \C\backslash\{\tfrac{\alpha}{\gamma}\}$, see \eqref{eq:mobiustrans}. Let us now first assume that $z\neq \tfrac{\alpha}{\gamma}$. Then there exists $s\in\mathbb{C}\backslash\{-\tfrac{\delta}{\gamma}\}$ such that $z=m(s)$.
    Utilizing the specific form of $m$ in \eqref{eq:mobiustrans} yields
    \begin{equation}\label{eq:hms}
    h(z)=h(m(s)) = -\frac{|\alpha\delta-\beta\gamma|^2}{|\gamma s+\delta|^2}\left(s+s^*\right)>0.
    \end{equation}
    This implies that $s+s^*=2\re\{s\}<0$, i.e., $s\in\C_-$.     On the other hand, for $z=\tfrac{\alpha}{\gamma}$ we have the equality $h(\tfrac{\alpha}{\gamma})=0$, which contradicts $h(z)>0.$
    Hence, it follows that $z=m(s) \in\A$ and therefore $\mathbb{S}\subseteq\A$.\\
    ``$\supseteq$''
    Consider $z\in\A$. Then, since $m$ as a mapping from $\C_-$ to $\A$ is surjective, there exists $s\in \mathbb C_-$ with $m(s)=z.$ 
    Now as in \eqref{eq:hms}  consider the explicit expression for $h(m(s))$. Note that $\gamma s +\delta\neq0$ since $s\in\C_-$ and we assumed $m$ to have its pole in the right half plane. Moreover, since $\re\{s\}<0$ and  $|\alpha\delta-\beta\gamma|^2/|\gamma s+\delta|^2>0$, we have that $h(z)=h(m(s))>0$. This shows ${\mathbb A}\subseteq \mathbb S$; and so $\mathbb{S}=\A$.
\end{proof}
In the generic case of $\A = \C_{-}$, i.e., in the case of asymptotically stable systems with poles in the open left-half plane, BT retains asymptotic stability. The situation is rather different in \texttt{conformalBT} since (i) the balanced system $\mathfrak{H}_
\bfG$~\eqref{eq:frakH} does not have the generic first-order state-space form and (ii) the reduction is applied on the original state-space quantities of $\bfG$  (as in Step 9 of \Cref{alg:conformalbt}) not of $\mathfrak{H}_\bfG$.  When does \texttt{conformalBT}
preserve stability in the sense that the retained poles also lie in the set $\A$? 
 Below we  prove this stability preservation result for \texttt{conformalBT} when specific M\"obius transformations are adopted as conformal maps. Before we state the result, we recall that a \textit{balanced} system has equal and diagonal Gramians. Thus in the setting of 
 \texttt{conformalBT}, $\bfG$ is \emph{conformally balanced} means that the Gramians 
 (of $\frak{H}_\bfG$) defined in \eqref{eq:ctrlgint} and \eqref{eq:obsgint} are equal and diagonal. In addition, when we refer to a \textit{controllable} system we mean that the controllability matrix of $\bfG$ is full rank.

\begin{theorem}\label{th:stabilitypreservation} Let the system $\bfG\in\newhardy$ with poles in the open set $\A$  be controllable. Also let the M\"obius transformation~ \eqref{eq:mobiustrans} parametrized as in \cref{th:mobiuslyap} with a pole in the right half plane or $\gamma=0$, be employed in \textnormal{\texttt{conformalBT}}.
Choose $r$ in Step 7 of \cref{alg:conformalbt} such that $\bfSigma_1$ is positive definite and has no diagonal entries in common with $\bfSigma_2$. Then the reduced system resulting from \textnormal{\texttt{conformalBT}} will have its poles in the open set $\A$.
\end{theorem}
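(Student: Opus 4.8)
The plan is to reduce the claim to a \emph{generalized} (region) Lyapunov characterization of $\A$ and then mimic the classical Pernebo--Silverman stability argument for balanced truncation. Without loss of generality I would work in conformally balanced coordinates, so that $\ctrlg=\obsg=\bfSigma=\mathrm{diag}(\bfSigma_1,\bfSigma_2)$ and the matrix returned by Step 9 is, up to similarity, the leading block $\bfA_{11}$ of the balanced $\bfA=\begin{bmatrix}\bfA_{11}&\bfA_{12}\\ \bfA_{21}&\bfA_{22}\end{bmatrix}$. The first step is to rewrite the M\"obius Lyapunov equations \eqref{eq:lyapcon}--\eqref{eq:lyapobs} as equations expressed directly in $\bfA$ rather than in $m^{-1}(\bfA)$. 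Setting $\mathbf{S}=\gamma\bfA-\alpha\bfI$ and using $m^{-1}(\bfA)=(\beta\bfI-\delta\bfA)\mathbf{S}^{-1}$ together with the fact that $\mathbf{S}$ commutes with every polynomial in $\bfA$, one obtains the key identity $\mathbf{S}\,m^{-1}(\bfA)=\beta\bfI-\delta\bfA$. Multiplying \eqref{eq:lyapcon} on the left by $\mathbf{S}$ and on the right by $\mathbf{S}^*$ then cancels all the inverses in $\mathbf{Q}_c$ and yields, after expansion,
\begin{equation*}
 c_{11}\ctrlg + c_{12}\bfA\ctrlg + c_{12}^*\ctrlg\bfA^* + c_{22}\bfA\ctrlg\bfA^* = |\alpha\delta-\beta\gamma|\,\bfB\bfB^*,
\end{equation*}
where $c_{11}=\beta\alpha^*+\beta^*\alpha$, $c_{12}=-\delta\alpha^*-\gamma\beta^*$ and $c_{22}=\delta\gamma^*+\delta^*\gamma$ are precisely the entries of the Hermitian form $h$ of \cref{lemma:polybarA}. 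The analogous manipulation of \eqref{eq:lyapobs} gives the dual equation with $\bfA^*$ in place of $\bfA$ and $\bfC^*\bfC$ on the right.

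Next I would extract the $(1,1)$ block. Since $[\bfA\bfSigma\bfA^*]_{11}=\bfA_{11}\bfSigma_1\bfA_{11}^*+\bfA_{12}\bfSigma_2\bfA_{12}^*$, the controllability equation restricted to the leading block becomes
\begin{equation*}
 c_{11}\bfSigma_1 + c_{12}\bfA_{11}\bfSigma_1 + c_{12}^*\bfSigma_1\bfA_{11}^* + c_{22}\bfA_{11}\bfSigma_1\bfA_{11}^* = |\alpha\delta-\beta\gamma|\,\bfB_1\bfB_1^* - c_{22}\,\bfA_{12}\bfSigma_2\bfA_{12}^*.
\end{equation*}
Here the hypothesis on the map enters decisively: a M\"obius transformation whose pole $-\delta/\gamma$ lies in the right half plane satisfies $c_{22}=2\,\re(\delta\gamma^*)<0$ (and $c_{22}=0$ when $\gamma=0$), so $-c_{22}\ge 0$ and the right-hand side is a sum of two positive semidefinite matrices, hence positive semidefinite. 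Thus $\bfSigma_1\succ0$ solves a generalized $\A$-region Lyapunov equation for $\bfA_{11}=\bfA_r$ with positive semidefinite right-hand side.

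The third step is an eigenvalue argument. Conjugating the block equation by $\bfSigma_1^{-1/2}$ turns it into $c_{11}\bfI+c_{12}\tilde\bfA+c_{12}^*\tilde\bfA^*+c_{22}\tilde\bfA\tilde\bfA^*=\tilde{\mathbf{W}}\succeq0$, where $\tilde\bfA=\bfSigma_1^{-1/2}\bfA_r\bfSigma_1^{1/2}$ is similar to $\bfA_r$. For a unit right eigenvector $v$ with $\tilde\bfA v=\mu v$, sandwiching gives $h(\mu)=v^*\tilde{\mathbf{W}}v-c_{22}\big(\|\tilde\bfA^*v\|^2-|\mu|^2\big)$; since $\|\tilde\bfA^*v\|^2\ge|v^*\tilde\bfA^*v|^2=|\mu|^2$ by Cauchy--Schwarz and $-c_{22}\ge0$, both contributions are nonnegative. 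Hence $h(\mu)\ge0$, and by \cref{lemma:polybarA} every eigenvalue of $\bfA_r$ lies in $\bar\A$.

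What remains, and this is the main obstacle, is to rule out eigenvalues on $\partial\A$, i.e.\ to upgrade $h(\mu)\ge0$ to $h(\mu)>0$. This is where controllability and the requirement that $\bfSigma_1$ and $\bfSigma_2$ share no diagonal entry must be used, and it parallels the Pernebo--Silverman proof for classical balanced truncation. If $h(\mu)=0$, then both nonnegative terms above vanish: $v$ lies in $\ker\tilde{\mathbf{W}}$, which forces $\bfB_1^*$- and (when $c_{22}\neq0$) $\bfA_{12}^*$-relations on the associated eigenvector, and tightness in Cauchy--Schwarz makes the same vector a left eigenvector of $\bfA_r$. Inserting these relations into the observability block equation as well, I would extract an $\bfA$-invariant subspace on which $\bfSigma_1$ acts and whose Hankel values are forced to coincide with diagonal entries of $\bfSigma_2$, contradicting the no-common-entry assumption (controllability excluding the degenerate case). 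I expect the bookkeeping of this last step to be the delicate part; the $\gamma=0$ case, where $c_{22}=0$ and $\A$ is a half-plane, collapses to the classical argument.
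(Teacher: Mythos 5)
Your proposal follows, in substance, the same route as the paper's proof: the same multiplication of \eqref{eq:lyapcon} by $\gamma\bfA-\alpha\bfI$ and its adjoint to obtain the quadratic-in-$\bfA$ equation (your $c_{11},c_{12},c_{22}$ are exactly the paper's $-\kappa_2,-\kappa_3,-\kappa_1$ in \eqref{eq:mobiuslyapdeveloped}), the same $(1,1)$-block extraction, the same sign observation $c_{22}<0$ for a pole in $\C_+$, and the same appeal to \cref{lemma:polybarA} to convert $h(\lambda)>0$ into $\lambda\in\A$. The one implementation difference in the inequality step is harmless: the paper sandwiches the block equation with a \emph{left} eigenvector $\mathbf{v}$ of $\bfA_r$, so that $\mathbf{v}^*\bfA_r\bfSigma_1\bfA_r^*\mathbf{v}=|\mu|^2\,\mathbf{v}^*\bfSigma_1\mathbf{v}$ holds exactly with $\bfSigma_1$ kept as the weight, whereas you conjugate by $\bfSigma_1^{-1/2}$ and use a right eigenvector, paying a Cauchy--Schwarz slack; the two are equivalent, and in the equality case tightness of Cauchy--Schwarz recovers precisely the left-eigenvector property.

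The step you defer as ``delicate bookkeeping'' is, however, much shorter than your plan suggests, and you already hold every ingredient: if $h(\mu)=0$, then with $w=\bfSigma_1^{-1/2}v$ your kernel condition and the Cauchy--Schwarz tightness say that $w$ is a left eigenvector of $\bfA_r$ satisfying $w^*\bfB_1=0$ and $w^*\bfA_{12}=0$; hence $[w^*\;0]$ is a left eigenvector of the balanced $\bfA_\mathcal{B}$ lying in the left kernel of $\bfB_\mathcal{B}$, so the controllability matrix of $(\bfA_\mathcal{B},\bfB_\mathcal{B})$ is rank deficient, contradicting the controllability hypothesis outright. That is exactly how the paper concludes; no detour through the observability equation, invariant subspaces, or the no-common-diagonal-entries assumption is needed in the $c_{22}<0$ case. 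One caution on your closing remark: when $\gamma=0$ the slack term carries the coefficient $-c_{22}=0$, so your mechanism no longer forces $v$ to be a left eigenvector, and the case does not simply ``collapse''---it genuinely requires the separate classical treatment. The paper carries this out by observing that $m^{-1}(\bfA_\mathcal{B})$ is an affine image of $\bfA_\mathcal{B}$, transferring controllability through the invariance of Krylov spaces under scaling and translation, invoking the standard balanced-truncation stability proof for the transformed system (this is where the no-common-entries assumption is used), and mapping the half-plane conclusion back to $\A$ through the bijective $m^{-1}$.
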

\begin{proof}
Let $\ctrlg$ and $\obsg$ be the solutions to the Lyapunov equations \eqref{eq:lyapcon}  and \eqref{eq:lyapobs}, respectively. 
We start by inserting the formula for the matrix function $m^{-1}(\bfA)=(\beta\bfI - \delta\bfA)(\gamma\bfA-\alpha\bfI)^{-1}$ into~\eqref{eq:lyapcon} to obtain 
    \begin{align*}
        &(\beta\bfI-\bfA\delta)(\gamma\bfA-\alpha\bfI)^{-1}\ctrlg + \ctrlg(\gamma\bfA-\alpha\bfI)^{-*}(\beta\bfI-\bfA\delta)^*\\
        &\hspace{2cm}= -|\alpha\delta-\beta\gamma|(\alpha\bfI- \gamma\bfA)^{-1}\mathbf{B}\mathbf{B}^*(\alpha\bfI- \gamma\bfA)^{-*}.
    \end{align*}
    Using the fact that the two matrices $(\beta\bfI - \delta\bfA)$ and $(\gamma\bfA-\alpha\bfI)^{-1}$ commute, we obtain 
     \begin{align*}
        &-(\beta\bfI - \delta\bfA)\ctrlg(\gamma\bfA-\alpha\bfI)^{*} - (\gamma\bfA-\alpha\bfI)\ctrlg(\beta\bfI - \delta\bfA)^* \\
        &\hspace{2cm}= -|\alpha\delta-\beta\gamma|\bfB\bfB^*,
    \end{align*}  
    which results in
    \begin{equation}\label{eq:mobiuslyapdeveloped}
            \kappa_1\bfA\ctrlg\bfA^* + \kappa_2\ctrlg + \kappa_3\bfA\ctrlg+\kappa_3^*\ctrlg\bfA^* = -|\alpha\delta-\beta\gamma|\bfB\bfB^*,
    \end{equation}
    with 
    \[
    \kappa_1 = -\gamma^*\delta-\gamma\delta^*,\;
    \kappa_2 = -\alpha^*\beta - \alpha\beta^*,\;
    \kappa_3 = \alpha^*\delta + \beta^*\gamma.
    \]
    Without loss of generality we assume $\bfG(\cdot)=\bfC_\mathcal{B}(\cdot\bfI-\bfA_\mathcal{B})^{-1}\bfB_\mathcal{B}$ is (conformally) balanced, i.e., 
    \[
    \ctrlg=\obsg=\bfSigma = 
    \begin{bmatrix}
        \bfSigma_1 & \\ & \bfSigma_2
    \end{bmatrix},
    \]
    with $\mathbf{\Sigma}$ being a diagonal matrix.
    In addition, we partition the balanced system matrices as follows
    \begin{equation}\label{eq:gamma0ornot}
    \bfA_{\mathcal{B}} = \begin{bmatrix}
        \bfA_{r} & \bfA_{12}\\ \bfA_{21} & \bfA_{22}
    \end{bmatrix}, \;
    \bfC_{\mathcal{B}} = \begin{bmatrix}
        \bfC_r & \bfC_{2}
    \end{bmatrix}, \;
    \bfB_{\mathcal{B}} = \begin{bmatrix}
        \bfB_r \\ \bfB_2
    \end{bmatrix}. \;
    \end{equation}
    After substituting the balanced system into \eqref{eq:mobiuslyapdeveloped} we consider the first row and column block equation to obtain
    \begin{align*}&\kappa_1(\bfA_r\bfSigma_1\bfA_r^*+\bfA_{12}\bfSigma_2\bfA^*_{12}) + \kappa_2\bfSigma_1 + \kappa_3\bfA_r\bfSigma_1 + \kappa_3^*\bfSigma_1\bfA_r^*=\\
    &\hspace{5cm}-|\alpha\delta-\beta\gamma| \bfB_r\bfB_r^*.
    \end{align*}
    We now adopt a similar strategy to \cite[Section 7.2.1]{Ant05}. Let $\bfA_r^*\mathbf{v}=\mu \mathbf{v}$ and $\bfA_r\mathbf{x}=\lambda \mathbf{x}$ with $\lambda=\mu^*$ where $\mathbf{v}$ and $\mathbf{x}$ are the left- and right-eigenvectors of $\bfA_r$ corresponding to the eigenvalue 
$\lambda$. Our goal is to show that $\lambda \in \A$. We  multiply the last equation by $\mathbf{v}^*$ and $\mathbf{v}$ from the left and right, respectively to obtain    
    \begin{align*}
        &\kappa_1\left(\mu^*\mathbf{v}^*\bfSigma_1\mathbf{v}\mu + \mathbf{v}^*\bfA_{12}\bfSigma_2\bfA_{12}^*\mathbf{v}\right) + \kappa_2\mathbf{v}^*\bfSigma_1\mathbf{v} + \kappa_3\mu^*\mathbf{v}^* \bfSigma_1\mathbf{v}\\
        & \hspace{3cm}+ \kappa_3^*\mathbf{v}^*\bfSigma_1\mathbf{v}\mu  = -|\alpha\delta-\beta\gamma|\mathbf{v}^*\bfB_r\bfB_r^*\mathbf{v},
    \end{align*}
    which becomes
        \begin{align*}
        &(-\kappa_1|\mu|^2 -  \kappa_3\mu^* - \kappa_3^*\mu - \kappa_2)\mathbf{v}^*\bfSigma_1\mathbf{v} \\
        &\hspace{1cm} =\kappa_1\mathbf{v}^*\bfA_{12}\bfSigma_2\bfA_{12}^*\mathbf{v} + |\alpha\delta-\beta\gamma|\mathbf{v}^*\bfB_r\bfB_r^*\mathbf{v}.
        \end{align*}
    Let us first consider the case in which the M\"obius transformation $m$, given in \eqref{eq:mobiustrans}, has a finite pole in $-\delta/\gamma$, which can be written as
    \begin{equation*}
        -\frac{\delta}{\gamma} = -\frac{\delta\gamma^*}{|\gamma|^2} = -\frac{\re\{\delta\gamma^*\}}{|\gamma|^2} - \iunit\frac{\im\{\delta\gamma^*\}}{|\gamma|^2}.
    \end{equation*}
    We assumed that the pole of $m$ lies in $\C_+$, which translates to having $\re\{\delta\gamma^*\}<0$. This results in 
    \[
    \kappa_1 =  -\delta\gamma^*-\delta^*\gamma = -2\re\{\delta\gamma^*\}>0.
    \]
    Because the terms $\bfA_{12}\bfSigma_2\bfA_{12}^*$ and $\bfB_r\bfB_r^*$ are positive semi-definite we then get 
    \begin{align*}
    &(-\kappa_1|\mu|^2 -  \kappa_3\mu^* - \kappa_3^*\mu - \kappa_2)\mathbf{v}^*\bfSigma_1\mathbf{v} \\
    &\hspace{1cm} =\kappa_1\mathbf{v}^*\bfA_{12}\bfSigma_2\bfA_{12}^*\mathbf{v} + |\alpha\delta-\beta\gamma|\mathbf{v}^*\bfB_r\bfB_r^*\mathbf{v} \geq 0.
    \end{align*}
    The fact that $\bfSigma_1$ is positive definite implies  
    \[
    -\kappa_1|\mu|^2 -  \kappa_3\mu^* - \kappa_3^*\mu - \kappa_2\geq 0
    \]
    After replacing $\mu$ with $\lambda^*$ we get
    \begin{equation}\label{eq:mobiuspreservationstability}
    -\kappa_1|\lambda|^2 -  \kappa_3^*\lambda^* - \kappa_3\lambda - \kappa_2\geq 0.
    \end{equation}
    To prove that the inequality in \eqref{eq:mobiuspreservationstability} is strict
    (so that we can employ \Cref{lemma:polybarA})
    we consider the case 
    \[
    -\kappa_1|\lambda|^2 -  \kappa_3^*\lambda^* - \kappa_3\lambda - \kappa_2= 0.
    \]
    Recall that $\bfA_{12}\bfSigma_2\bfA_{12}^*$ and $\bfB_r\bfB_r^*$ are positive semi-definite, and $\kappa_1>0$. For the equality to hold, we then have $\mathbf{v}^*\bfA_{12}=0$ and $\mathbf{v}^*\bfB_{r}=0$. Because $\mathbf{v}^*$ is also a left eigenvector of $\bfA_{r}$ then $[\mathbf{v}^*\;0]$ is a left eigenvector of $\bfA_{\mathcal{B}}$. Due to $[\mathbf{v}^*\;0]$ being a non-trivial element of the left kernel of $\bfB_\mathcal{B}$ then the controllability matrix
    \[ 
    \begin{bmatrix}
        \bfB_{\mathcal{B}} & \bfA_{\mathcal{B}}\bfB_{\mathcal{B}} & \dots & \bfA_{\mathcal{B}}^{n-1}\bfB_{\mathcal{B}}  
    \end{bmatrix},
    \]
    does not have full rank. This results in the system not being controllable which contradicts the assumption of the theorem.  In other words
    \begin{equation}\label{eq:mobiuspreservationstabilitystrict}
    -\kappa_1|\lambda|^2 -  \kappa_3^*\lambda^* - \kappa_3\lambda - \kappa_2> 0.
    \end{equation}
    Due to \cref{lemma:polybarA} we can then conclude that the ROM poles need to be in $\A$.
    
    The case of $\gamma=0$ needs some special care. First, we note that $m\colon \mathbb C\to \mathbb C$ with $m(s)=\tfrac{\alpha s+\beta}{\delta}$ is bijective and we obtain
    \[
    \kappa_1 = 0,\;
    \kappa_2 = -\alpha^*\beta - \alpha\beta^*,\;
    \kappa_3 = \alpha^*\delta.
    \]
    Then \eqref{eq:mobiuslyapdeveloped} becomes  
    \[
    \kappa_2\ctrlg + \kappa_3\bfA\ctrlg+\kappa_3^*\ctrlg\bfA^* = -|\alpha\delta|\bfB\bfB^*.
    \]
    As above we consider the balanced realization of $\bfG$ with matrices in \eqref{eq:gamma0ornot} resulting in 
    \[
    \kappa_2\bfSigma + \kappa_3\bfA_\mathcal{B}\bfSigma+\kappa_3^*\bfSigma\bfA_\mathcal{B}^* = -|\alpha\delta|\bfB_\mathcal{B}\bfB_\mathcal{B}^*.
    \]
    Expanding the terms leads to 
    \begin{equation}\label{eq:middlegamma0}
    \left(\alpha^*\delta\bfA_\mathcal{B}-\alpha^*\beta\bfI\right)\bfSigma + \bfSigma\left(\alpha^*\delta\bfA_\mathcal{B}-\alpha^*\beta\bfI\right)^* = -|\alpha\delta|\bfB_\mathcal{B}\bfB_\mathcal{B}^*.
    \end{equation}
    By multiplying the left and right hand side of \eqref{eq:middlegamma0} by $|\alpha|^{-2}$ we can rewrite the equality as 
    \[
        \tilde{\bfA}_\mathcal{B}\bfSigma + \bfSigma\tilde{\bfA}_\mathcal{B}^*=-\frac{|\alpha\delta|}{|\alpha|^2} \bfB_\mathcal{B}\bfB_\mathcal{B}^*,
    \]
    where $\tilde{\bfA}_\mathcal{B}=(\delta\bfA_\mathcal{B}-\beta\bfI)\alpha^{-1}=m^{-1}(\bfA_\mathcal{B})$.
    Here, $m^{-1}$ maps the spectrum of $\bfA_\mathcal{B}$ from $\A$ into $\C_-$.
    Since $\gamma=0$, $\tilde{\bfA}_\mathcal{B}$ is a combination of scaling, rotation, and translation of the original matrix  $\bfA_\mathcal{B}$. 
    Consider now the Krylov space spanned by the columns of the controllability matrix
    \begin{equation*}
    \mathcal{K}(\bfB_\mathcal{B},\bfA_{\mathcal{B}})=\textnormal{span}\left\{\bfB_{\mathcal{B}},\; \bfA_{\mathcal{B}}\bfB_{\mathcal{B}},\; \dots ,\;\bfA_{\mathcal{B}}^{n-1}\bfB_{\mathcal{B}}\right\}.
    \end{equation*}
    Due to the system $\bfG$ being controllable, we have that $\mathcal{K}(\bfB_\mathcal{B},\bfA_{\mathcal{B}})$ spans $\C^n$. 
    It is known that a Krylov space is invariant to scaling (including rotation) and translation. For this reason we have that $\mathcal{K}(\frac{\sqrt{\alpha\delta}}{\alpha}\bfB_\mathcal{B},\tilde{\bfA}_{\mathcal{B}})=\mathcal{K}(\bfB_\mathcal{B},\bfA_{\mathcal{B}})$ which results in the system composed of the matrices $\left(\tilde{\bfA}_\mathcal{B},\frac{\sqrt{\alpha\delta}}{\alpha}\bfB_\mathcal{B},\frac{\sqrt{\alpha\delta}}{\alpha}\bfC_\mathcal{B}\right)$ being controllable.
    Now, consider the partition
    \[
    \tilde\bfA_{\mathcal{B}} = \begin{bmatrix}
        \tilde\bfA_{r} & \tilde\bfA_{12}\\ \tilde\bfA_{21} & \tilde\bfA_{22}
    \end{bmatrix},
    \]
    we can then prove that the eigenvalues of $\tilde\bfA_{r}$ lie in $\C_-$ by simply following the proof in \cite[Section 7.2.1]{Ant05} for the continuous-time case with a system having the matrices $\left(\tilde{\bfA}_\mathcal{B},\frac{\sqrt{\alpha\delta}}{\alpha}\bfB_\mathcal{B},\frac{\sqrt{\alpha\delta}}{\alpha}\bfC_\mathcal{B}\right)$. 
    Let $\tilde\lambda$ and $\lambda$  be eigenvalues of $\tilde\bfA_{r}$ and $\bfA_r$, respectively.
    Due to $\re\{\tilde\lambda\}<0$, we then have that
    \begin{equation}\label{eq:inequalitygamma0}
    \re\{\tilde\lambda\} = \re\left\{\frac{\delta\lambda-\beta}{\alpha}\right\} = \re\{m^{-1}(\lambda)\} <0.
    \end{equation} 
    Due to the bijectivity of $m^{-1}$, the only points that satisfy the inequality in \eqref{eq:inequalitygamma0} are $\lambda\in\A$.
\end{proof}
It is important to recall that even if the adopted Gramians are from $\mathfrakH_\bfG$, the theorem applies to the poles of the original system $\bfG$ and not of $\mathfrakH_\bfG$. In \cref{sec:5} we illustrate how \cref{th:stabilitypreservation} applies (and is employed) in two numerical examples.

\begin{remark}\label{remark:1}
    It is important to note that we use the term \textit{stability} in the sense described in \cite[Section 3.4.7]{HinPri05} and not necessarily meaning that the reduced system is asymptotically stable. 
    More precisely, we adopt the term to indicate that if the full order system has its poles in a specific domain $\A$ then the reduced model will have them in the same domain. 
    If asymptotic stability needs to be achieved then it is essential to choose an appropriate conformal map with range in $\C_-$.
\end{remark}

The next theorem provides a bound for the error norm $\|\bfG-\bfG_r\|_{\newhardy}$, which  directly  results from \cite{SorAnt05} where general structured transfer functions are discussed. It is worth mentioning that, in what follows, we do not impose a specific conformal map to be used.
\begin{theorem}\label{th:H2errorbound}
Given $\bfG\in\newhardy$, the reduced system $\bfG_r$  from \textnormal{\texttt{conformalBT}} satisfies the  inequality
\begin{equation}\label{eq:H2bound}
    \|\bfG-\bfG_r\|_{\newhardy}^2\leq \textnormal{trace}\{\mathbf{C}_2\mathbf{\Sigma}_2\mathbf{C}_2^*\} + \varepsilon\mathrm{trace}\{\mathbf{\Sigma}_2\},
\end{equation}
where $\varepsilon\in\R$ is a constant that depends on the (conformally) balanced realization of $\bfG$ and $\bfG_r$, and  $\mathbf{\Sigma}_2$ indicates the neglected singular values in Step 7 of \Cref{alg:conformalbt}. 
\end{theorem}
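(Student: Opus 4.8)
The plan is to transport the whole problem from the weighted space $\newhardy$ into the classical Hardy space $\hardy_2$ through the operator $\mathfrak{H}$ of \eqref{eq:operatorH}, and then invoke the structured balanced truncation error bound of \cite{SorAnt05}. Since $\mathfrak{H}$ is linear in its argument, we have $\mathfrak{H}_{\bfG-\bfG_r}=\mathfrak{H}_{\bfG}-\mathfrak{H}_{\bfG_r}$, and \cref{def:H2A} gives immediately
\[
\|\bfG-\bfG_r\|_{\newhardy}^2=\|\mathfrak{H}_{\bfG}-\mathfrak{H}_{\bfG_r}\|_{\hardy_2}^2.
\]
It therefore suffices to bound an ordinary $\hardy_2$ error, but now between the two \emph{structured} transfer functions $\mathfrak{H}_{\bfG}(\cdot)=\bfC\bfK(\cdot)^{-1}\bfB$ from \eqref{eq:frakH} and $\mathfrak{H}_{\bfG_r}(\cdot)=\bfC_r\bfK_r(\cdot)^{-1}\bfB_r$, where $\bfK$ is as in \eqref{eq:K} and $\bfK_r(\cdot)=\psi(\cdot)\psi'(\cdot)^{-1/2}\bfI_r-\bfA_r\psi'(\cdot)^{-1/2}$ is assembled from the reduced matrices \eqref{eq:rommatrices}.

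The second step is to certify that $\mathfrak{H}_{\bfG_r}$ is \emph{exactly} the structured balanced truncation of $\mathfrak{H}_{\bfG}$ in the sense of \cite{SorAnt05}. Two facts are needed. First, the projection bases of \cref{alg:conformalbt} satisfy the biorthogonality relation $\mathbf{W}_r^*\bfV_r=\bfI_r$ (a direct consequence of the singular value decomposition of $\mathbf{U}^*\mathbf{L}$), so that $\bfK_r(\cdot)=\mathbf{W}_r^*\bfK(\cdot)\bfV_r$ and hence $\mathfrak{H}_{\bfG_r}=(\bfC\bfV_r)(\mathbf{W}_r^*\bfK(\cdot)\bfV_r)^{-1}(\mathbf{W}_r^*\bfB)$ is precisely the Petrov--Galerkin reduction of $\mathfrak{H}_{\bfG}$ with these bases. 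Second, the Gramians \eqref{eq:ctrlgint} and \eqref{eq:obsgint} driving \cref{alg:conformalbt} are, by construction, the structured controllability and observability Gramians of $\mathfrak{H}_{\bfG}$; since the algorithm performs the usual square-root balancing on them, \texttt{conformalBT} coincides with structured BT applied to $\mathfrak{H}_{\bfG}$.

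The third step applies the $\hardy_2$ bound of \cite{SorAnt05} to $\|\mathfrak{H}_{\bfG}-\mathfrak{H}_{\bfG_r}\|_{\hardy_2}$. Expanding the squared error and using the Gramian identity $\|\mathfrak{H}_{\bfG}\|_{\hardy_2}^2=\mathrm{trace}\{\bfC\,\ctrlg\,\bfC^*\}$, which follows directly from \eqref{eq:ctrlgint}, together with the conformally balanced realization $\ctrlg=\obsg=\bfSigma=\mathrm{diag}(\bfSigma_1,\bfSigma_2)$ and the partition $\bfC_\mathcal{B}=\begin{bmatrix}\bfC_r & \bfC_2\end{bmatrix}$, the ``full'' contribution splits as $\mathrm{trace}\{\bfC_r\bfSigma_1\bfC_r^*\}+\mathrm{trace}\{\bfC_2\bfSigma_2\bfC_2^*\}$. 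The first summand is, up to a cross-Gramian correction, cancelled by the reduced contribution, leaving the leading term $\mathrm{trace}\{\bfC_2\bfSigma_2\bfC_2^*\}$ of \eqref{eq:H2bound}; the residual cross-term is the quantity estimated in \cite{SorAnt05} and is absorbed into $\varepsilon\,\mathrm{trace}\{\bfSigma_2\}$, where $\varepsilon$ gathers the constants attached to the balanced realizations of $\bfG$ and $\bfG_r$.

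The main obstacle lies in this last step, and concretely in the cross-term. In contrast to classical BT, the structured Gramian of the reduced system $\mathfrak{H}_{\bfG_r}$ need not equal $\bfSigma_1$: the reduced resolvent $\bfK_r$ is not the $(1,1)$ block of $\bfK$, so the Lyapunov-type telescoping that makes the classical cross-terms vanish is no longer exact. The work therefore consists of (i) verifying that the structure $\bfK(\cdot)=\psi(\cdot)\psi'(\cdot)^{-1/2}\bfI-\bfA\psi'(\cdot)^{-1/2}$ meets the admissibility hypotheses of \cite{SorAnt05}---analyticity along $\iunit\R$ and finite, well-defined structured Gramians, which hold under \cref{assumption:1} together with $\mathfrak{H}_{\bfG},\mathfrak{H}_{\bfG_r}\in\hardy_2$---and (ii) controlling the residual cross-Gramian contribution so as to extract a finite constant $\varepsilon$ depending only on the two balanced realizations. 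Both are precisely what \cite{SorAnt05} provides for general structured transfer functions, so once the correspondence established in the second step is in place, the bound \eqref{eq:H2bound} follows as a direct corollary.
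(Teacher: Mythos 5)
Your proposal is correct and takes essentially the same route as the paper: the paper likewise identifies $\|\bfG-\bfG_r\|_{\newhardy}$ with $\|\mathfrak{H}_\bfG-\mathfrak{H}_{\bfG_r}\|_{\hardy_2}$, treats \texttt{conformalBT} as structured balanced truncation of $\mathfrak{H}_\bfG(\cdot)=\bfC\bfK(\cdot)^{-1}\bfB$, and reruns the expansion of \cite[Section 4.3.3]{SorAnt05} (written out explicitly only to account for complex-valued transfer functions), arriving at the explicit constant $\varepsilon=\sup_{\omega}\|\mathbf{L}(\iunit\omega)^*\bfC_r^*(\bfC_r\mathbf{L}(\iunit\omega)-2\bfC_2)\|_2$ with $\mathbf{L}(\cdot)=\bfK_r(\cdot)^{-1}\bfK_{12}(\cdot)$. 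One correction to your closing paragraph: because $\bfK(\cdot)$ in \eqref{eq:K} is affine in $\bfA$ and $\mathbf{W}_r^*\bfV_r=\bfI$, the reduced matrix function $\bfK_r(\cdot)=\mathbf{W}_r^*\bfK(\cdot)\bfV_r$ \emph{is} exactly the $(1,1)$ block of the balanced $\bfK_\mathcal{B}(\cdot)$, so the obstacle you name does not exist --- indeed this exactness is precisely what makes your Step 2 valid; what fails to commute with truncation is inversion, i.e.\ $\bfK_r(\cdot)^{-1}\bfB_r=\mathbf{N}_{\mathcal{B}1}(\cdot)+\mathbf{L}(\cdot)\mathbf{N}_{\mathcal{B}2}(\cdot)$ differs from the first block $\mathbf{N}_{\mathcal{B}1}(\cdot)$ of $\bfK_\mathcal{B}(\cdot)^{-1}\bfB_\mathcal{B}$, and it is the term $\mathbf{L}(\cdot)\mathbf{N}_{\mathcal{B}2}(\cdot)$ that generates the cross terms ultimately absorbed into $\varepsilon\,\textnormal{trace}\{\bfSigma_2\}$.
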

\begin{proof}
The proof is similar to \cite[Section 4.3.3]{SorAnt05}. Here we highlight the most important steps considering complex valued transfer functions. 
Consider the balanced realization $\mathfrak{H}_\bfG(\cdot)=\bfC_\mathcal{B}\bfK_\mathcal{B}(\cdot)^{-1}\bfB_\mathcal{B}$ and let $\mathbf{N}_\mathcal{B}(\cdot) = \mathbf{K}_\mathcal{B}(\cdot)^{-1}\mathbf{B}_\mathcal{B}$. We then rewrite the controllability Gramian of $\mathfrak{H}_\bfG$ from \eqref{eq:frakH} as
\begin{equation}\label{eq:Xc}
\ctrlg = \frac{1}{2\pi}\int_{-\infty}^{\infty} \mathbf{N}(\mathrm{i}\omega)\mathbf{N}(\mathrm{i} \omega)^*\mathrm{d}\omega=
\underbrace{\begin{bmatrix}
	\mathbf{\Sigma}_1 &  \\  & \mathbf{\Sigma}_2
\end{bmatrix}}_{\mathbf{\Sigma}}.
\end{equation}
We then make the following partitions: 
\begin{equation}\label{eq:balancedsysmatrices}
    \begin{aligned}
    	&\mathbf{K}_\mathcal{B}(\cdot)=\begin{bmatrix}
    		\mathbf{K}_r(\cdot) & \mathbf{K}_{12}(\cdot) \\ \mathbf{K}_{21}(\cdot) & \mathbf{K}_{22}(\cdot)
    	\end{bmatrix},
    	\quad
    	\mathbf{C}_\mathcal{B} = \begin{bmatrix}
    		\mathbf{C}_r & \mathbf{C}_2
    	\end{bmatrix},
    	\\
    	&\mathbf{B}_\mathcal{B} = \begin{bmatrix}
    		\mathbf{B}_r\\ \mathbf{B}_2
    	\end{bmatrix},
    	\quad
    	\mathbf{N}_\mathcal{B}(\cdot) = \begin{bmatrix}
    		\mathbf{N}_{\mathcal{B}1}(\cdot)\\
    		\mathbf{N}_{\mathcal{B}2}(\cdot)
    	\end{bmatrix}.
    \end{aligned}
\end{equation}
By inserting \eqref{eq:balancedsysmatrices} into \eqref{eq:Xc} we obtain
\begin{equation*}
	\begin{aligned}
		\mathbf{\Sigma}_1 &=  \frac{1}{2\pi}\int_{-\infty}^{\infty} \mathbf{N}_{\mathcal{B}1}(\mathrm{i}\omega)\mathbf{N}_{\mathcal{B}1}(\mathrm{i} \omega)^*\mathrm{d}\omega,\\
		\mathbf{\Sigma}_2 &=  \frac{1}{2\pi}\int_{-\infty}^{\infty} \mathbf{N}_{\mathcal{B}2}(\mathrm{i}\omega)\mathbf{N}_{\mathcal{B}2}(\mathrm{i} \omega)^*\mathrm{d}\omega,\\
		0 &=  \frac{1}{2\pi}\int_{-\infty}^{\infty} \mathbf{N}_{\mathcal{B}1}(\mathrm{i}\omega)\mathbf{N}_{\mathcal{B}2}(\mathrm{i} \omega)^*\mathrm{d}\omega.
	\end{aligned}
\end{equation*}
Considering the first row of $\mathbf{B}_\mathcal{B} = \mathbf{K}_\mathcal{B}(\cdot)\mathbf{N}_\mathcal{B}(\cdot)$, we get
$
\bfB_r = \mathbf{K}_r(\cdot)\bfN_{\mathcal{B}1}(\cdot) + \mathbf{K}_{12}(\cdot)\bfN_{\mathcal{B}2}(\cdot)
$.
Define the reduced order quantity $\mathbf{N}_r(\cdot) = \mathbf{K}_r(\cdot)^{-1}\mathbf{B}_r$. 
Then, by substituting the expression for $\bfB_r$ into this definition, 
we obtain 
\begin{equation*}
	\mathbf{N}_r(\cdot) = \mathbf{N}_{\mathcal{B}1}(\cdot) + \mathbf{L}(\cdot)\mathbf{N}_{\mathcal{B}2}(\cdot),
\end{equation*}
with $\mathbf{L}(\cdot)=\mathbf{K}_r(\cdot)^{-1}\mathbf{K}_{12}(\cdot)$.
Consider now the error norm 
\begin{equation}\label{eq:h2equality}
\begin{aligned}
	\|\mathfrak{H}_\mathcal{E}\|_{\mathcal{H}_2}^2 &= \|\mathfrak{H}_\bfG - \mathfrak{H}_{\bfG_r}\|_{\mathcal{H}_2}^2 \\
    &= \|\mathfrak{H}_\bfG\|_{\mathcal{H}_2}^2- 2\textnormal{Re}\left\{\langle\mathfrak{H}_\bfG,\mathfrak{H}_{\bfG_r}\rangle_{\mathcal{H}_2}\right\}+ \|\mathfrak{H}_{\bfG_r}\|_{\mathcal{H}_2}^2,
\end{aligned}
\end{equation}
with the three developed terms being 
\begin{equation}\label{eq:h2equality1}
	\|\mathfrak{H}_\bfG\|_{\mathcal{H}_2}^2 = \textnormal{trace}\{\mathbf{C}_r\mathbf{\Sigma}_1\mathbf{C}_r^*\} + \textnormal{trace}\{\mathbf{C}_2\mathbf{\Sigma}_2\mathbf{C}_2^*\},
\end{equation}
\begin{equation}\label{eq:h2equality2}
\begin{aligned}
    \langle\mathfrak{H}_\bfG,\mathfrak{H}_{\bfG_r}\rangle_{\mathcal{H}_2} &= \textnormal{trace}\{\mathbf{C}_r\mathbf{\Sigma}_1\mathbf{C}_r^*\} \\
    &\hspace{-1cm}+ \frac{1}{2\pi}\int_{-\infty}^{\infty} \textnormal{trace}\{\mathbf{C}_\mathcal{B}\mathbf{N}_\mathcal{B}(\mathrm{i}\omega)\mathbf{N}_{\mathcal{B}2}(\mathrm{i} \omega)^*\mathbf{L}(\mathrm{i}\omega)^*\mathbf{C}_r^*\}\mathrm{d}\omega,\\
\end{aligned}
\end{equation}
\begin{equation}\label{eq:h2equality3}
\begin{aligned}
    \|\mathfrak{H}_{\bfG_r}\|_{\mathcal{H}_2}^2&= \textnormal{trace}\{\mathbf{C}_r\mathbf{\Sigma}_1\mathbf{C}_r^*\} \\
    &\hspace{-2cm}+ 2\textnormal{Re}\left\{\frac{1}{2\pi}\int_{-\infty}^{\infty} \textnormal{trace}\{\mathbf{C}_r\mathbf{N}_{\mathcal{B}1}(\mathrm{i}\omega)\mathbf{N}_{\mathcal{B}2}(\mathrm{i} \omega)^*\mathbf{L}(\mathrm{i}\omega)^*\mathbf{C}_r^*\}\mathrm{d}\omega\right\}\\
	&\hspace{-2cm}+\frac{1}{2\pi}\int_{-\infty}^{\infty} \textnormal{trace}\{\mathbf{C}_r\mathbf{L}(\mathrm{i}\omega)\mathbf{N}_{\mathcal{B}2}(\mathrm{i} \omega)\mathbf{N}_{\mathcal{B}2}(\mathrm{i} \omega)^*\mathbf{L}(\mathrm{i}\omega)^*\mathbf{C}_r^*\}\mathrm{d}\omega.
\end{aligned}
\end{equation}
By plugging \eqref{eq:h2equality1}, \eqref{eq:h2equality2}, and \eqref{eq:h2equality3} into \eqref{eq:h2equality} we get
\begin{align*}\|\mathfrak{H}_\mathcal{E}\|_{\mathcal{H}_2}^2 &= \textnormal{trace}\{\mathbf{C}_2\mathbf{\Sigma}_2\mathbf{C}_2^*\} \\
    &+\textnormal{Re}\Big\{\frac{1}{2\pi}\int_{-\infty}^{\infty} \textnormal{trace}\{\left(\mathbf{C}_r\mathbf{L}(\mathrm{i}\omega)-2\mathbf{C}_2\right)\\
    &\hspace{2cm}\mathbf{N}_{\mathcal{B}2}(\mathrm{i}\omega)\mathbf{N}_{\mathcal{B}2}(\mathrm{i} \omega)^*\mathbf{L}(\mathrm{i}\omega)^*\mathbf{C}_r^*\}\mathrm{d}\omega\Big\}\\
\end{align*}
with the bound
\begin{align*}
	\|\mathfrak{H}_\mathcal{E}\|_{\mathcal{H}_2}^2&\leq \textnormal{trace}\{\mathbf{C}_2\mathbf{\Sigma}_2\mathbf{C}_2^*\} \\
    &\hspace{0.5cm}+ \sup_{\omega}\|\mathbf{L}(\mathrm{i}\omega)^*\mathbf{C}_r^*(\mathbf{C}_r\mathbf{L}(\mathrm{i}\omega)-2\mathbf{C}_2)\|_2\textnormal{trace}\{\mathbf{\Sigma}_2\}.	
\end{align*}
\end{proof}

\section{Numerical experiments} \label{sec:5}
In this section, we test \texttt{conformalBT}
on three numerical examples. 
While the spectrum of the full order system is in $\C_-$ for the first example, it is in $\mathrm{i}\R_+$ for the second, and in $\mathrm{i}\R$ for the third.
All the numerical experiments were generated on a Lenovo ThinkPad with an 8 core Intel\textsuperscript{\textregistered} i7-8565U 1.8GHz processor, 48GB of RAM, and MATLAB R2023b.
For all PDE examples, we used a spatial semi-discretization by centered finite differences. 

To compute the conformal Gramians while the \texttt{lyapchol} command was used for the first two examples, in \cref{sec:heat} and \cref{sec:sch}, an adaptive Gauss-Kronrod algorithm was used for 
the third example in~\cref{sec:wave} and its implementation follows the steps described in \cite{Bre16}. For the computation of the $\hardy_2$ norms and the output trajectories we relied on the commands \texttt{integral} and \texttt{ode23}, respectively. Both the \texttt{ode23} and the \texttt{integral} functions adopt relative and absolute tolerances of $10^{-8}$ and $10^{-12}$, respectively. The code to generate the numerical results is  available at \url{https://github.com/aaborghi/conformalBT.git}

\subsection{Heat equation}\label{sec:heat}
In the first example we consider the boundary controlled heat equation described as
\begin{equation*}
	\begin{aligned}
		\frac{\partial w(x,t)}{\partial t}&=\frac{\partial^2w(x,t)}{\partial x^2}, && \text{on } (0,1)\times(0,T), \\
		w(0,t) &= 0, \; w(1,t) = u(t), && \text{on } (0,T), \\
		y(t) &= \int_{0.1}^{0.4} w(x,t)\mathrm{d}x, && \text{on } (0,T),\\
		w(x,0)&=0, && \text{in } (0,1),
	\end{aligned}
\end{equation*}
where $u$ and $y$ are, respectively, the scalar input and output. 
The discretization results in a full order system of dimension $n = 200$, $q=m=1$, and poles in the negative real axis $\R_-$. This example was specifically chosen due to its spectrum being in the left-half plane so that we can compare the performance of \texttt{conformalBT} with that of classical balanced truncation. 
For the former we choose a M\"obius transformation~\eqref{eq:mobiustrans} that maps the left-half plane $\C_-$ into the disk $\A=\D_{R,c}=\{z\in\C\big| |z-c|< R\}$ and $\iunit\R$ to $\partial\D_{R,c}$. To do so, we choose the parameters $\alpha,\beta,\gamma,\delta$ such that we have 
\begin{equation}\label{eq:mobiusheat}
\psi(\cdot) = c + R\frac{\cdot +1}{\cdot - 1}.
\end{equation}
Because $\psi\colon\C_-\rightarrow\D_{R,c}$ is a mapping that satisfies the assumptions of \cref{th:stabilitypreservation} we then have that \texttt{conformalBT} computes a ROM with poles in $\D_{R,c}$. As a matter of fact, plugging in the values of $\alpha,\beta,\gamma$ and $\delta$ of \eqref{eq:mobiusheat} in the inequality \eqref{eq:mobiuspreservationstabilitystrict} results in
    \[
    |\lambda-c|^2 < R^2,
    \]
which indicates the disk $\D_{R,c}$. 
A similar result can be found in \cite[Example 3.4.104]{HinPri05} by replacing $\alpha, \beta,\gamma, \delta$ in \eqref{eq:mobiuslyapdeveloped} with $R+c, R-c, 1, -1$, respectively.
To enclose the spectrum of the system we choose $c=-17\times 10^4$ and $R=17\times 10^4$.
Recalling \cref{remark:1} we need the range of \eqref{eq:mobiusheat} to be a subset of $\C_-$ so to avoid the potential placement of the ROM poles in the right half plane. For this reason we chose $R=-c$.
Since \eqref{eq:mobiusheat} satisfies the assumptions in \cref{th:mobiuslyap}, we compute the Gramians by solving the Lyapunov equations \eqref{eq:lyapcon} and \eqref{eq:lyapobs}.

We present the $\newhardy$ error norm along with the error bound in the top plot of \cref{fig:heatdyn} for various $r$ values. In addition, the bottom plot shows the output relative error of the impulse response for a reduced model with $r=10$ computed by \texttt{conformalBT} and BT. The result shows that 
the two algorithms reach the same level of accuracy. 
\begin{figure}[htb]
\centering
\input{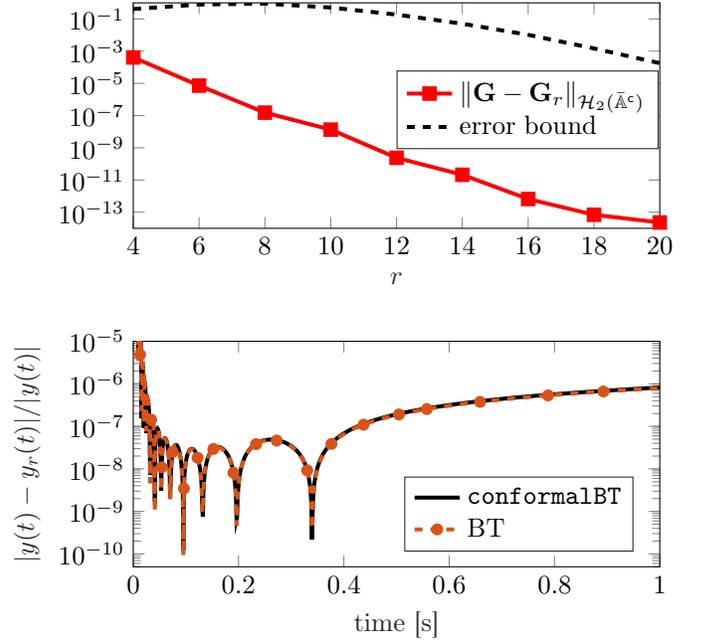}
\caption{(Top) $\newhardy$ error norm between the full and reduced order systems along with the $\hardy_2$ error bound in \eqref{eq:H2bound} for different values of $r$. (Bottom) relative error of the ROM impulse response with $r=10$ computed with \texttt{conformalBT} and BT applied to the Heat equation with $n=200$.}
\label{fig:heatdyn}
\end{figure}

\subsection{Schr\"odinger equation}\label{sec:sch}
In this example we test \texttt{conformalBT} as given in \cref{alg:conformalbt} on a controlled variant of the Schr\"odinger equation
\begin{equation*}
	\begin{aligned}
		\frac{\partial w(x,t)}{\partial t}&=-\mathrm{i} \frac{\partial^2w(x,t)}{\partial x^2}+\chi_{[0.4,0.5]}u^{(1)}(t)\\
  & \quad+\chi_{[0.5,0.6]}u^{(2)}(t), && \text{on } (0,1)\times(0,T), \\
		w(0,t)&=0, \ \ w(1,t)=0, && \text{on } (0,T), \\
		\mathbf{y}(t) &= \begin{bmatrix} \vspace{0.2cm}
		    \int_{0.1}^{0.3} w(x,t)\,\mathrm{d}x\\ 
            \int_{0.7}^{0.9} w(x,t)\,\mathrm{d}x\\
		\end{bmatrix}, && \text{on } (0,T),\\
		w(x,0)&=0, && \text{in } (0,1),
	\end{aligned}
\end{equation*}
where $u^{(1)}$ and $u^{(2)}$, and $\mathbf{y}$ are the inputs and outputs, respectively. 
After discretization, the differential equation results in a FOM with $q=2$, $m=2$, and $n=1000$. Because the spectrum of the system is on the upper part of the imaginary axis we choose the clockwise rotatory conformal map
\begin{equation}\label{eq:mobiusschroedinger}
\psi(\cdot) = -\iunit\cdot
\end{equation}
with $\psi\colon\C_-\rightarrow\C_{\uparrow}$, and $\psi\colon\iunit\R\rightarrow\R$, where $\A=\C_{\uparrow}$ is the open upper half complex plane. Similarly to \cref{sec:heat}, also here the conformal map chosen in \eqref{eq:mobiusschroedinger} satisfies the assumptions in \cref{th:stabilitypreservation}. As a matter of fact, plugging in the coefficients of the M\"obius transformation in \eqref{eq:mobiusschroedinger} into \eqref{eq:mobiuspreservationstabilitystrict} results in the inequality $\im(\lambda)>0$, which exactly defines $\C_{\uparrow}$. In addition, \eqref{eq:mobiusschroedinger} satisfies also the assumptions in \cref{th:mobiuslyap}, allowing us to compute the Gramians by solving \eqref{eq:lyapcon} and \eqref{eq:lyapobs}.

In the top plot of \cref{fig:schdyn} we show the evolution of the $\newhardy$  error norm between the FOM and the ROM computed with \texttt{conformalBT} for different reduced orders $r$ along with the error bound given in \cref{th:H2errorbound}. The middle plot shows the output relative error of the step response with $r=9$. 
The spikes in the error are due to all the output signals reaching $0$. It can be seen from the error plot that the resulting ROM output well approximates the FOM response given the control inputs $u^{(1)} = u^{(2)} = \mathbf{u}$ showed in the bottom plot.

We note that since $\psi$ satisfies the assumptions of \cref{th:stabilitypreservation},
it is assured that the reduced system poles are in $\A=\C_{\uparrow}$, thus the reduced system retains stability in the sense of  \cref{th:stabilitypreservation}. However, this does not mean that the reduced poles will also exactly lie on the upper part of the imaginary axis since the upper half plane includes the upper-right and upper-left quadrants. This can result in a ROM that does not mirror the output behaviour of the full order system. In the next example we choose a conformal map designed to overcome this issue.

\begin{figure}[htb]
\centering\input{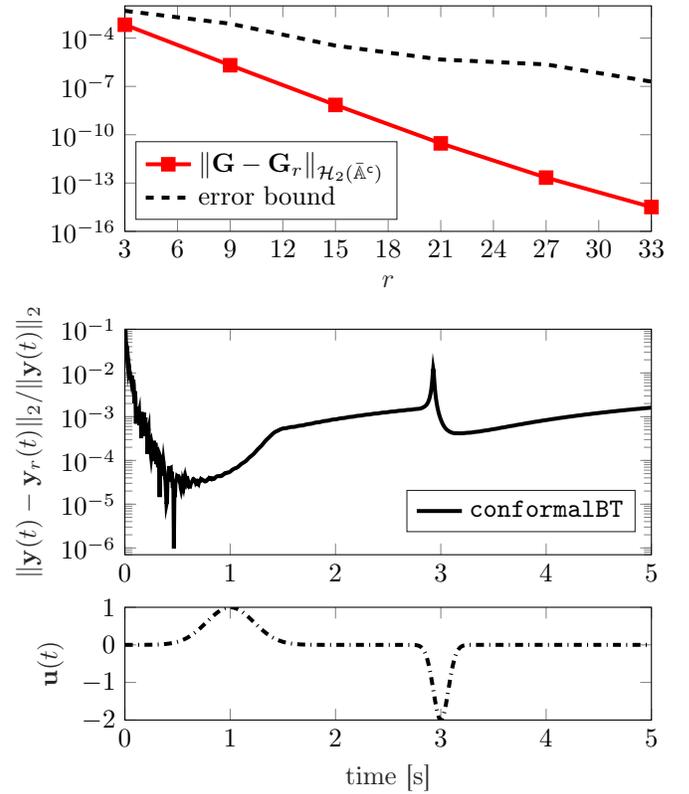}
\caption{(Top) $\newhardy$ error norm between the full and reduced order systems along with the $\hardy_2$ error bound in \eqref{eq:H2bound} for different values of $r$. (Middle) relative error of the ROM step response with $r=9$ computed with \texttt{conformalBT} applied to the Schr\"odinger equation with $n=1000$. (Bottom) Input adopted for evaluating \texttt{conformalBT}. Here, $u^{(1)}$ and $u^{(2)}$ follow the same trajectory equal to $\mathbf{u}$.}
\label{fig:schdyn}
\end{figure}

\subsection{Undamped linear wave equation}\label{sec:wave}
The last example consists of the controlled wave equation
\begin{equation*}
	\begin{aligned}
	\frac{\partial^2w(x,t)}{\partial t^2}&= \frac{\partial^2w(x,t)}{\partial x^2}+ \chi_{[0.1,0.2]}u^{(1)}(t)\\
    &\quad + \chi_{[0.8,0.9]}u^{(2)}(t), && \text{on } (0,1)\times(0,T), \\
		w(0,t)&=0, \ \ w(1,t)=0, && \text{on } (0,T), \\
		\mathbf{y}(t) &= \begin{bmatrix} \vspace{0.2cm}
		    \int_{0.3}^{0.5} w(x,t)\,\mathrm{d}x\\ \vspace{0.2cm}
            \int_{0.6}^{0.7} w(x,t)\,\mathrm{d}x\\ 
		\end{bmatrix}, && \text{on } (0,T),\\
		w(x,0)&=0, && \text{in } (0,1).
	\end{aligned}
\end{equation*}
After discretization we get a full order model with $n=5000$, $q=2$, and $m=2$. 
The spectrum of this system lies on the imaginary axis. 
We designed a function $\psi$ based on the Joukowski transform that maps part of the left-half plane, including the imaginary axis, into a Bernstein ellipse $\mathbb{B}$ excluding the strip $[-1,1]$, i.e., $\mathbb{B}\backslash[-1,1]$. 
This is then translated by $c\in\C$ and scaled by $M\in\C$. We refer to the resulting domain with and without the strip as $\mathbb{B}_{M,c}$ and $\A=\tilde{\mathbb{B}}_{M,c}$, respectively. A thorough analysis of this function is given in \cite[Section 4.1.3]{BorBre23}. The corresponding conformal map is defined as
\begin{equation}\label{eq:joukowskiwave}
\psi(\cdot)=c+\frac{M}{2}\left(R\frac{\cdot+1}{\cdot-1} + \frac{1}{R}\frac{\cdot-1}{\cdot+1}\right)
\end{equation}
with $\psi\colon\mathbb{X}\rightarrow\tilde{\mathbb{B}}_{M,c}$ and $\psi\colon\iunit\R\rightarrow\partial\mathbb{B}_{M,c}$. Here, $\mathbb{X}$ is a subset of $\C_-$ that includes the imaginary axis and $\partial\mathbb{B}_{M,c}$ coincides with $\partial\A^+$ in \cref{assumption:1}.3. A graphical representation of \eqref{eq:joukowskiwave} is showed in \cref{fig:joukowskiwave}.
To avoid computing reduced systems with poles off the imaginary axis, as discussed in \cref{sec:sch}, we choose the parameters of \eqref{eq:joukowskiwave} such that the ellipse $\partial\mathbb{B}_{M,c}$ has the minor semi-axis that is small enough for the set $\tilde{\mathbb{B}}_{M,c}$ to cover the section of the imaginary axis with the FOM poles as closely as possible. 
Since \eqref{eq:joukowskiwave} does not satisfy the assumptions of \cref{th:mobiuslyap}, we compute the Gramians by approximating \eqref{eq:ctrlgint} and \eqref{eq:obsgint} using the adaptive Gauss-Kronrod quadrature (see \cite[Section 5.1]{Bre16} and citations therein). It is interesting to point out that, even if not proven, in this numerical example the poles of $\mathbf{G}_r$ computed with \cref{alg:conformalbt} lie inside $\tilde{\mathbb{B}}_{M,c}$. By keeping the minor semi-axis as small as possible we are able to keep the reduced model poles approximately on the imaginary axis. This is done by setting the value of $R$ close to $1$. Accordingly, we choose the following parameters for \eqref{eq:joukowskiwave}: $R=1+10^{-5}$, $M=10^4$, $c=10^{-6}$. The disadvantage of decreasing the minor semi-axis is that the ellipse contour approaches the spectrum of the full order model. This makes the offline computation of the Gramians computationally expensive as the adaptive Gauss-Kronrod needs to refine the integration interval several times. Nevertheless, the resulting ROM computed by \texttt{conformalBT} with $r=40$ have its poles almost nearly on the imaginary axis with a maximum real part of approximately $\pm 10^{-10}$,  thus fulfilling our objective. \cref{fig:expwave} shows the impulse response of the discretized wave equation compared to the reduced model computed by \texttt{conformalBT}. The error in the bottom plot of \cref{fig:expwave} shows that the resulting ROM output provides a high-fidelity approximation to the FOM output dynamics.

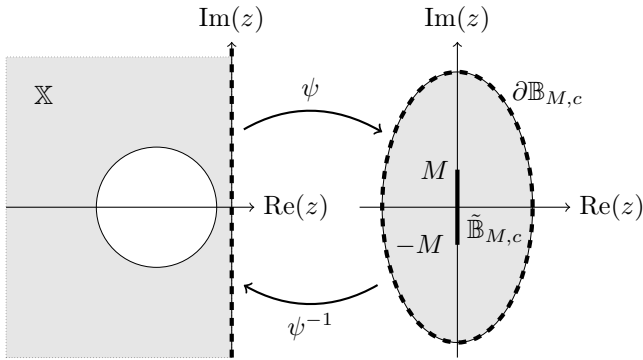
\begin{figure}[htb]
    \centering
    \begin{tikzpicture}[scale=1]

        \draw[ultra thin, densely dotted, fill=gray!20] (-3,-2) rectangle (0,2);
        \draw[->] (0,-2) -- (0,2.2) node[above] {$\text{Im}(z)$};
        \draw[ultra thick, dashed , black] (0,-2) -- (0,2.2);
        \draw [ultra thin, fill=white!100] (-1,0) circle (0.8);
        \draw[->] (-3,0) -- (0.3,0) node[right] {$\text{Re}(z)$};
        \node at (-2.5,1.5) {$\mathbb{X}$};

        \draw [ultra thin, fill=gray!20] (5-2,0) ellipse (1cm and 1.8cm);
        \draw [black, dashed, ultra thick] (5-2,0) ellipse (1cm and 1.8cm);
        \draw[->] (3.2-1.5,0) -- (7-2.5,0) node[right] {$\text{Re}(z)$};
        \draw[->] (5-2,-2) -- (5-2,2.2) node[above] {$\text{Im}(z)$};
        \node at (5.5-2,-0.3) {$\tilde{\mathbb{B}}_{M,c}$};
        \node at (6.2-2,1.5) {$\partial\mathbb{B}_{M,c}$};
        \draw[ultra thick] (5-2,0.5) -- (5-2,-0.5);
        \node at (4.5-2,-0.5) {$-M$};
        \node at (4.7-2,0.5) {$M$};

        \draw[->, thick, shorten >=0pt, shorten <=2pt] (0.1,1) to[bend left] node[midway, above] {$\psi$} (4-2,1);
        \draw[->, thick, shorten >=2pt, shorten <=2pt] (4-2,-1) to[bend left] node[midway, below] {$\psi^{-1}$} (0.1,-1);
        
    \end{tikzpicture}
    \caption{A depiction of the conformal map in \eqref{eq:joukowskiwave} centered at the origin ($c=0$). The grey sets on the left and on the right are, respectively, the domain and range of $\psi$. Here we have the scaling being $M\in\iunit\R$. The thick line in $\tilde{\mathbb{B}}_{M,c}$ indicates the strip $[-1,1]$ after the scaling.}
    \label{fig:joukowskiwave}
\end{figure}

\begin{figure}[htb]
    \centering
    \input{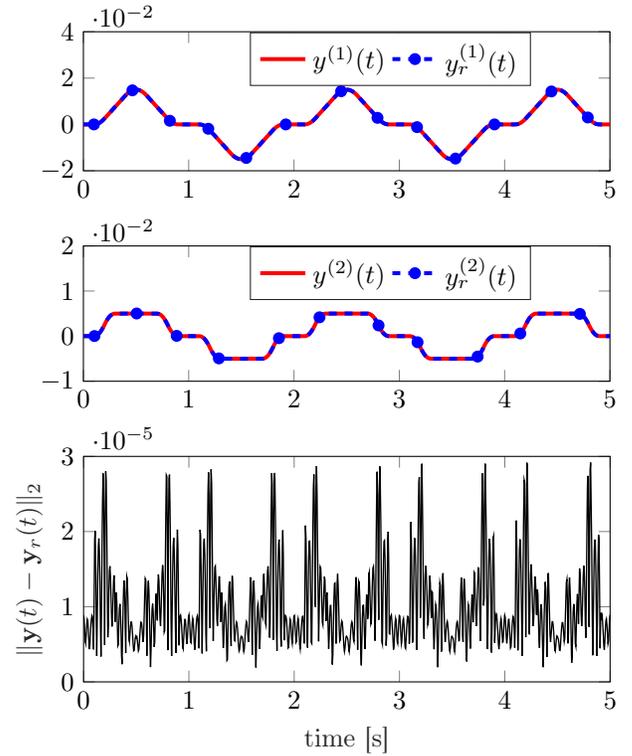}
    \caption{(Top) impulse response of the full and reduced order systems, $y^{(i)}$ and $y_r^{(i)}$, respectively, with $i=1,2$. Here \texttt{conformalBT} computed an $r=40$ reduced order model on a discretized wave equation with $n=5000$.  (Bottom) the output error.}
    \label{fig:expwave}
\end{figure}

\section{Conclusions}
In this paper, we presented a new balanced truncation framework which allows the treatment of transfer functions with poles in general domains. We adopted conformal maps and the $\newhardy$ space to define the Gramians related to these particular systems. We showed that when the M\"obius transformation is used as conformal map, it is possible to compute the new Gramians by solving modified Lyapunov equations. For the proposed algorithm \texttt{conformalBT} we proved that the resulting reduced model has a bounded $\hardy_2$ error norm and that, when the M\"obius transformation is adopted, it preserves the stability of the original full order system.

\subsection*{Acknowledgments}
We would like to thank Jesper Schr\"oder for his helpful discussions.
The work of Borghi and Breiten was funded by the Deutsche Forschungsgemeinschaft (DFG, German Research Foundation) - 384950143 as part of GRK2433 DAEDALUS.  
Gugercin's work was
supported in part by the US National Science Foundation under Grant CMMI-2130695.

\subsection*{Conflict of interest}
The authors declare no competing interests.

\addcontentsline{toc}{section}{References}
\bibliographystyle{spmpsci}
\bibliography{myref}
\end{document}